\documentclass[a4paper,11pt]{article}

% packages and commands

% general
\usepackage[margin=24mm]{geometry}
\usepackage{graphicx,tikz}
\usetikzlibrary{calc, arrows.meta} 
\usepackage{multirow}
\usepackage{xcolor}

% referencing
\usepackage[
    backend=biber,
    style=alphabetic,
    maxcitenames=100,
    maxbibnames=100,
    backref=false,
    date=year,
    eprint=false,
    url=false,
    doi=false,
    isbn=false
]{biblatex}
\bibliography{bibliography}
 \AtEveryCitekey{\clearlist{location}}
 \AtEveryBibitem{\clearlist{location}}
\makeatletter
\newcommand\footnoteref[1]{\protected@xdef\@thefnmark{\ref{#1}}\@footnotemark}
\makeatother

% algorithm and code
\usepackage[algo2e,vlined,ruled,linesnumbered,nofillcomment]{algorithm2e} 
\SetKwIF{If}{ElseIf}{Else}{if}{}{else if}{else}{end if}%
\SetKwFor{While}{while}{}{end while}%
\SetKwFor{For}{for}{}{end for}%
\DontPrintSemicolon

% taken from https://www.overleaf.com/learn/latex/Code_listing
\usepackage{listings}
\lstdefinestyle{snippets}{
    backgroundcolor=\color{bg},   
    commentstyle=\color{ggreen},
    keywordstyle=\color{gred},
    numberstyle=\tiny\color{gray},
    stringstyle=\color{gblue},
    identifierstyle=\color{gblue},
    basicstyle=\ttfamily\footnotesize,
    breakatwhitespace=false,         
    breaklines=true,                 
    captionpos=b,                    
    keepspaces=true,                 
    numbers=left,                    
    numbersep=5pt,                  
    showspaces=false,                
    showstringspaces=false,
    showtabs=false,                  
    tabsize=2,
    xleftmargin=3mm,
    xrightmargin=3mm,
    framexleftmargin=3mm,
    framexrightmargin=3mm,
    framextopmargin=0mm,
    framexbottommargin=0mm,
    frame=tlbr,framesep=5pt,framerule=0pt
}
\lstset{style=snippets}

%%% MATH COMMANDS
\usepackage{amsmath,amssymb}
\usepackage{colonequals}

\DeclareMathOperator*{\erule}{\mathrm{er}}

\newcommand\diam{\mathrm{diam}}
\newcommand\distbottleneck{d_b}
\newcommand\R{\mathbf{R}}
\newcommand\Vect{\mathsf{Vect}}
\newcommand\Z{\mathbf{Z}}

\newcommand{\nameshort}{{\sc\mbox{TopoAware}}}
\newcommand{\namelong}{Topologically Aware Constructions for Large and Irregular datasets}

% Colors
\definecolor{color0}{HTML}{1f77b4}    % matplotlib blue C0
\definecolor{color1}{HTML}{ff7f0e}    % matplotlib orange C1
\definecolor{color2}{HTML}{2ca02c}    % matplotlib green C2
\definecolor{color3}{HTML}{d62728}    % matplotlib red C3
\definecolor{color4}{HTML}{9467bd}    % matplotlib yellow C4
\definecolor{color5}{HTML}{8c564b}    % matplotlib 

\definecolor{gblue}{HTML}{4285f4}   % Google Material theme blue
\definecolor{ggreen}{HTML}{0f9d58}  % Google Material theme green
\definecolor{gyellow}{HTML}{f4b400} % Google Material theme yellow
\definecolor{gred}{HTML}{db4437}    % Google Material theme red

\definecolor{fg}{HTML}{0f9d58}  % dark green foreground for diagrams
\definecolor{bg}{HTML}{cfebde}  % light green background for diagrams

\newcommand\emphasize[1]{{\color{ggreen}\textbf{#1}}}

% comments 

% line numbers
% \usepackage{lineno}
% \linenumbers
% \renewcommand\thelinenumber{\color{ggreen}\arabic{linenumber}}

% hyperref, cleveref, theorem environments
\usepackage{hyperref} 
\usepackage[noabbrev,capitalize]{cleveref}

\usepackage{amsthm,thmtools}
\newtheorem{theorem}{Theorem}[section]

\theoremstyle{definition}
\newtheorem{definition}[theorem]{Definition}
\newtheorem{remark}[theorem]{Remark}

%\crefname{theorem}{Theorem}{Theorems}
\crefname{definition}{Definition}{Definitions}

\begin{document}

% watermark if necessary
% \AddToHook{shipout/foreground}{
%        \begin{tikzpicture}[overlay, remember picture]
%             \node[text=red, rotate=45, scale=10, opacity=.4] at (current page) {\textbf{DRAFT}};
%         \end{tikzpicture}    
%     }

% msc classes: 55N31 (Primary) 68U05, 54H30 (Secondary)

\title{Stability of 0-dimensional persistent homology in enriched and sparsified point clouds}
% \title{A topological method for inferencing uniformly dense samples}

\author{
Jānis Lazovskis\footnote{Institute of Clinical and Preventive Medicine, University of Latvia, Riga, Latvia}\ $^\dagger$,
Ran Levi\footnote{Institute of Mathematics, University of Aberdeen, Aberdeen, United Kingdom},
Juliano Morimoto$^\dagger$\footnote{
Programa de Pós-graduação em Ecologia e Conservação, Universidade Federal do Paraná, Curitiba 82590-300, Brazil
}
}

\date{\today}

\maketitle

\begin{abstract}
We give bounds for dimension 0 persistent homology and codimension 1 homology of Vietoris--Rips, alpha, and cubical complex filtrations from finite sets related by enrichment (adding new elements), sparsification (removing elements), and aligning to a grid (uniformly discretizing elements).
For enrichment we use barycentric subdivision, for sparsification we use a minimum separating distance, and for aligning to a grid we take the quotient when dividing each coordinate value by a fixed step size.
We are motivated by applications presenting large and irregular datasets, and the development of persistent homology to better work with them.
In particular, we consider an application to ecology, in which the state of an observed species is inferred through a high-dimensional space with environmental variables as dimensions.
This ``hypervolume'' has geometry (volume, convexity) and topology (connectedness, homology), which are known to be related to the current and potentially future status of the species.
We offer an approach for the analysis of hypervolumes with topological guarantees, complementary to current statistical methods, giving precise bounds between persistence diagrams of Vietoris--Rips and alpha complexes, and a duality identity for cubical complexes.
Implementation of our methods, called {\nameshort}, is made available in C++, Python, and R, building upon the GUDHI library.
\end{abstract}

\noindent
\textbf{keywords:} persistent homology, stability, barycentric subdivision, sparsification, duality, niche, hypervolume, functional ecology

\section{Introduction}

Topological data analysis (TDA) \cite{carlsson2009} has seen a wide variety of mathematical, computational, and applied developments since its inception in the late 1990s and early 2000s \cite{dey_wang_2022,DONUT}.
Most applications of TDA start with a finite set, sourced from some collected or observed data, with a notion of distance between the samples, and the machinery of persistent homology (PH) \cite{robins99} is applied to it.
This process builds a sequence of ever larger topological spaces on this finite set, most often a simplicial or cubical complex, constructed by considering nearby subsets of the input data set, after which the topological features of the individual spaces are tracked as the ``nearness'' parameter increases.
Translating homological observations back into the source of the data is easiest in low dimensions, hence our focus on dimensions 0 and 1. 

In the theme of developing mathematically sound methods for TDA, we consider the application of topology to \emphasize{ecology}, in particular the consideration of the concept of the \emphasize{niche} of a species as a topological space \cite{hutchinson1957, elton1927animal, grinnell1917niche, vandermeer1972niche}.
Recent interest and tools work to better understand this space \cite{blonder2018, qiao2016nichea, escobar2018ecological}, and we contribute to this field with a complementary approach: rather than taking a statistical view of the niche as in  for example \cite{blonder2018} and \cite{qiao2016nichea}, we take a topological view as in \cite{conceiccao2022holey}. In quantitative ecology, a common approach is to model the realized niche from observations of individuals of the species. To achieve this, the environmental variables associated with each observation are used to construct a point cloud corresponding to the combination of environmental variables that species can exist. The collection of points representing this environmental fingerprint is then used alongside statistical tools to create the species' \textit{realised niche hypervolume}, which is a denser (richer) point cloud that includes the true observations and additional points inferred from the statistical models. In this paper, we adopt a different approach: we enrich the  the point cloud by adding in new data points \textit{between} existing observations, rather than \textit{around} a single existing data point as commonly done with statistical approaches.
% fill in the space between existing observations, then balance the density of all observations, with parameters to precisely bound any topological features of the underlying space.
The examples given are 2-dimensional spaces, as the niche hypervolume is usually analyzed after dimensionality reduction (by e.g. principal component analysis) has been applied, resulting in 2 or 3 principal components. Nevertheless, our methods generalize to any number of dimensions, with a notably limiting computational cost being the alpha complex \cite{Edelsbrunner1995} in higher dimensions.

\subsection{Motivation}
The computation of PH most often runs into technical limits, as the number of simplices involved in computations increases sharply with any increase in the underlying data set or the requested dimension.
We are motivated by 
\begin{itemize}
    \item the desire to effectively execute topological computations in such settings, while maintaining a precise relationship with the original dataset;
    \item the potential to apply our methods to other fields such as ecology, and we include a discussion of this relationship throughout the work.
\end{itemize}
The intended audience for this work is either mathematicians working with large and irregular datasets, which often come from other fields of science, or ecologists wishing to better understand the foundations of computational methods. We present our results in this context.

In ecology, it is paramount to understand how species interact with each other and with their environment. This helps forecast species distribution, extinction risks, and responses to changing climate.
The geometric object defined by the parameters required for survival of the species is its realized niche, and it is then no surprise this object is a cornerstone in ecology \cite{soberon2009niches, chase2009ecological}.
There are three historic and widely accepted definitions of a niche, namely the habitat niche \cite{grinnell1917niche}, functional niche \cite{elton1927animal}, and the Hutchinsonian niche \cite{hutchinson1957}. This paper is motivated by the use and applications of the Hutchinsonian niche hypervolume,  because it provides a quantitative definition and can be used as the conceptual framework for analyzing species niches. 

Given one of these approaches, the \emphasize{fundamental niche} is the complete set of biotic and abiotic factors where a species can in theory persist. However, due to species-species interactions and other ecological and environmental factors, species often occupy a narrower niche than they otherwise could. Thus, this \textit{realization} of the fundamental niche –– known as the \emphasize{realized niche} –– is the subset biotic and abiotic conditions where the species is actually observed. Our methodology is applicable to realized niches, as the fundamental niche of species is rarely, if ever, entirely known. 
Even so, generalizing species niche from limited data is useful not only for averting potential future issues, but for limiting the current harm, for example, when for example working with the UN Red List and inferring new locations of species \cite{cazalis2024}.

\subsection{Related work}
A central part of our methods is reducing the amount of computations by reducing the amount of simplices in a filtration.
Reducing the number of simplices before applying the machinery of persistent homology is well-studied, precisely due to this potential of reducing computational overhead \cite{sheehy2013,Botnan2015,simba}.
Existing approaches often focus on particular constructions, such as the Vietoris--Rips or \v Cech simplicial complexes, and describe bounds using interleavings of persistence modules \cite{chazal2009}.
We narrow our focus on removing 0-simplices, a focus also employed by \cite{Herick2024} by additionally considering a function $f\colon X\to \R$. 
We note that the results of \cite[Theorem 5.5]{Herick2024} correspond with our \cref{thm_mainsparse}, in the case that $f(x)=\epsilon$ is constant.

The bounds we present are in bottleneck distance, between the same constructions on different datasets. 
We work in dimension 0 and 1 homology, often making use of \emphasize{bar-to-bar} maps \cite{Agerberg2025} between persistence diagrams, and work with geometric arguments on the underlying metric space.
A similar perspective, of directly relating elements of the underlying set to elements of the persistence diagram is explored more generally in \cite{gulen2025}.

In \cref{sec_cubicalduality} we construct a point cloud on a grid, computing its homology and the homology of its complement, with a cubical complex filtration.
Applying PH to data arranged in a grid has been developed by the natural context of building sequences of cubical complexes from 2D and 3D images \cite{wagner2012}. 
Data that is almost a grid has been considered through by applications to the rigidity of particles in materials, as ``hyperuniformity" \cite{Salvalaglio2024}.
The process of discretizing data before applying PH has also been studied \cite{dlotko2018}, though more for the context of discretizing continuous data, rather than an already discrete set.

In terms of our main application, the Hutchinsonian niche hypervolume, its intuitive abstract definition and the abundant availability of data \cite{gbif,worldclim} have resulted its broad scientific usage.
Recent developments in mathematical methods for computing geometric properties of the hypervolume have been based on kernel density estimators \cite{blonder2018}. 
This approach is statistical and provides no guaranty against distorting the topological features of the niche hypervolume.
With such an approach, the geometric and topological properties of the hypervolume remain unclear in connection with the input data, warranting our persistence diagram approach, which respects the underlying topology. Previous work by \cite{conceiccao2022holey} have conceptualized the realized niche in a topological approach and more recently, \cite{caron2025evolution} have used this approach to study the evolution of realized niche in woody plants. However, these previous studies were constrained by the computational costs associated with calculating topological features in high dimensional, dense point clouds, thereby limiting their scope.

% Part of our contribution requires finding close points in an input data set. This is an active research question, with particular focus being given to nearest-neighbor searches in high dimensions (Herold et al: Sublinear Data Structures for Nearest Neighbor in Ultra High Dimensions \url{https://arxiv.org/abs/2503.03079})

%\janis{what is the idea here? clarify} Adversarially Robust Topological Inference by Vishwanath et al \url{https://arxiv.org/abs/2206.01795}.

% The 0th degree homology has been studied as a functor in its own right: "On the additive image of 0th persistent homology" \url{https://arxiv.org/abs/2501.09132}

\subsection{Contribution}
Our work addresses computational issues when working with the PH of point clouds, from a topological perspective.
Addressing computational limits allows our methods to be applied to other fields as well, as the starting point is simply a finite metric space. 

Our main contributions are to present barycentric subdivision as a way to add new elements to a point cloud; to prove the stability of barycentric subdivision, sparsification, and ``gridification'' of a point cloud; and to present duality as a computational tool for computing codimension 1 homology of a point cloud. 
Upon this work we intend to further develop and apply these methods to relevant datasets in the computational ecological community, providing a complementary approach to reach the same goal \cite{blonder2018}.

%As a mathematical development, we prove stability results for 0-dimensional persistent homology when applying barycentric subdivision and sparsification to a dataset, and use duality compute 1-dimensional persistent homology after data has been aligned to a grid.

The underlying mathematical ideas are presented in \cref{sec_topology}, and our main results, along with examples demonstrating sharp bounds, are in \cref{sec_mainresults}.
% Our work builds on recent results in approximations of persistent homology \cite{Herick2024}, as well as provides 
The first three results (\cref{thm_mainbary,thm_mainsparse,thm_maingrid}) set bounds for the bottleneck distance of degree-0 PH between an input data set and one of the associated structures.
The fourth result (\cref{thm_complement}) uses duality to describe degree-$(N-1)$ homology of a cubical complex on a grid in $\Z^2$, in terms of the degree-0 homology of a coarser cubical complex on the same grid.

\subsection{Outline}
We introduce the necessary topological structures in \cref{sec_topology}, assuming that the reader is familiar with topological concepts like metric spaces, homology, and simplicial complexes (the interested reader is otherwise referred to a source such as \cite{hatcher}).
The main operations that we perform on data sets (barycentric subdivision, sparsification, aligning to a grid) are discussed in detail in \cref{sec_structures}.
In \cref{sec_mainresults} we state and prove our four main results.
An implementation of executing the main operations, as well as an example of executing them is described in \cref{sec_testing}.
We conclude the work with a discussion for applications and continued development in \cref{sec_discussion}.

\section{Topological background}
\label{sec_topology}

Let $(X,d)$ be a finite metric space, in our context most often a subset of $\R^N$ with $d$ the Euclidean metric $d$, and an ordering on the elements.
The distance $d$ is omitted when it is clear from context.

\subsection{Simplicial and cubical complexes}
We will make use of three topological constructions, presented visually in \cref{fig_examples0}.

The \emphasize{Vietoris--Rips} complex $VR_r(X)$ on $X$ is the set $\{\sigma \subseteq X\ :\ d(x,x')\leqslant r\ \forall\ x,x'\in \sigma\}$, and has an advantage of being defined only by computing pairwise distances between points, a computation which is affected much less by the ambient dimension than for other constructions.

The \emphasize{alpha} complex $A_r(X)$ on $X$ is defined \cite{edelsbrunner_harer} using the Voronoi cell $V(x) = \{y\in \R^n\ :\ d(x,y) \leqslant d(x',y)\ \forall\ x'\in X, x\neq x'\}$ at $x$, and the closed ball $B_r(x) = \{y\in \R^n\ :\ d(x,y) \leqslant r\}$ at $x$.
A simplex $\sigma$ exists in $A_r(X)$ if $\bigcap_{x\in \sigma} (B_r(x)\cap V(x)) \neq \emptyset$.
This complex has simplices in dimension at most the ambient dimension (when $X$ is in general position), which usually makes it smaller than the Vietoris--Rips complex in low dimensions.

A \emphasize{cubical} complex is made up of cubes $[0,1]^k$ instead of simplices. 
Given $G\subseteq \mu\Z^n$ for $\mu>0$, a $k$-cube in a cubical complex on $G$ is the image of an  isometry $\{0,\mu\}^k\to G$, and when $G$ is ordered, a $k$-cube is ordered with the highest order value among all its vertices.
This is the lower-star \emphasize{filtration} on $G$ induced by the 0-cubes (an analogous lower-star filtration is induced by ordering the maximal cubes), and the cubical complex of cubes with filtration values at most $r$ is denoted by $\mathcal C_{\mu,r}(G)$.
We call this simply the cubical filtration, and write $\mathcal C_\mu(G)$ for $r\gg 0$.

\begin{figure}[htbp]
    \centering
	\newcommand\pdist{.04} % radius of black points
\newcommand\gdist{1.5pt} % radius of green points
\newcommand\framestep{5.1} % how far apart from each the frames are
\newcommand\framevstep{-1.5} % how far apart, vertically, pictures are
\newcommand\unit{.3} % width of one unit in grid
\newcommand\labx{0}
\newcommand\laby{.8}
\begin{tikzpicture}
% left: vietoris-rips 1
\node[anchor=east] at (\labx,\laby) {$VR_r(X)$};
\foreach \x\y\n in {0.5/.8/a, 1.5/.8/b, 2.2/1.2/c, 2.4/1.2/d, 2/.7/e, 2.1/.5/f}{
  \coordinate (\n) at (\x,\y);
  % \node[red] at (\n) {\n};
}
\foreach \x\y\z in {c/d/e, b/e/f}{
 \fill[bg] (\x)--(\y)--(\z)--cycle;
}
\foreach \n in {a, b, c, d, e, f}{ \fill[black] (\n) circle (\pdist); }
\foreach \x\y in {c/d, e/f, b/e, c/e, d/e, b/f}{
  \draw (\x)--(\y);
}
% left: vietoris-rips 2
\begin{scope}[shift={(0,\framevstep)}]
\node[anchor=east] at (\labx,\laby) {$VR_{r'}(X)$};
\foreach \x\y\n in {0.5/.8/a, 1.5/.8/b, 2.2/1.2/c, 2.4/1.2/d, 2/.7/e, 2.1/.5/f}{
  \coordinate (\n) at (\x,\y);
}
\foreach \x\y\z in {b/e/f, b/c/e, c/d/e, e/f/d, a/b/e}{
 \fill[bg] (\x)--(\y)--(\z)--cycle;
}
\foreach \n in {a, b, c, d, e, f}{ \fill[black] (\n) circle (\pdist); }
\foreach \x\y in {c/d, e/f, b/e, c/e, b/f, c/f, d/f, b/c, b/d, a/b, a/e}{
  \draw (\x)--(\y);
}
\end{scope}
% middle: alpha 1
\begin{scope}[shift={(\framestep,0)}]
\node[anchor=east] at (\labx,\laby) {$A_r(X)$};
\foreach \x\y\n in {0.5/.8/a, 1.5/.8/b, 2.2/1.2/c, 2.4/1.2/d, 2/.7/e, 2.1/.5/f}{
  \coordinate (\n) at (\x,\y);
}
\foreach \x\y\z in {c/d/e, b/e/f}{
  \fill[bg] (\x)--(\y)--(\z)--cycle;
 }
\foreach \n in {a, b, c, d, e, f}{ \fill[black] (\n) circle (\pdist); }
\foreach \x\y in {c/d, e/f, b/e, c/e, d/e, b/f}{
  \draw (\x)--(\y);
}
\end{scope}
% middle: alpha 2
\begin{scope}[shift={(\framestep,\framevstep)}]
\node[anchor=east] at (\labx,\laby) {$A_{r'}(X)$};
\foreach \x\y\n in {0.5/.8/a, 1.5/.8/b, 2.2/1.2/c, 2.4/1.2/d, 2/.7/e, 2.1/.5/f}{
  \coordinate (\n) at (\x,\y);
}
\foreach \x\y\z in {c/d/e, b/e/f, b/c/e, d/e/f}{
  \fill[bg] (\x)--(\y)--(\z)--cycle;
 }
\foreach \n in {a, b, c, d, e, f}{ \fill[black] (\n) circle (\pdist); }
\foreach \x\y in {c/d, e/f, b/e, c/e, d/e, b/f, d/f, b/c, a/b}{
  \draw (\x)--(\y);
}
\end{scope}
% left: cubical 1
\begin{scope}[shift={(2*\framestep+.4,.3)}]
\node[anchor=east] at (\labx-.4,\laby-.3) {$\mathcal C_{\mu,r}(G)$};
\foreach \x\y\n in {0/2/1, 2/2/2, 2/1/3, 3/1/4, 3/0/5, 3/2/6, 4/2/7, 4/3/8, 5/3/9}{
  \coordinate (\n) at (\x*\unit,\y*\unit);
}
\foreach \n in {1,...,5}{
  \fill[black] (\n) circle (\pdist); 
  \node[anchor=north east,scale=.4] at (\n) {\n};
}
\foreach \n in {6,...,9}{
  \fill[black] (\n) circle (.06); 
  \fill[white] (\n) circle (\pdist); 
  \node[anchor=north east,scale=.4] at (\n) {\n};
}
\foreach \x\y in {2/3, 3/4, 4/5}{
  \draw (\x)--(\y);
}
\end{scope}
% left: cubical 2
\begin{scope}[shift={(2*\framestep+.4,\framevstep+.3)}]
\node[anchor=east] at (\labx-.4,\laby-.3) {$\mathcal C_{\mu,r'}(G)$};
\foreach \x\y\n in {0/2/1, 2/2/2, 2/1/3, 3/1/4, 3/0/5, 3/2/6, 4/2/7, 4/3/8, 5/3/9}{
  \coordinate (\n) at (\x*\unit,\y*\unit);
}
\fill[bg] (2)--(6)--(4)--(3)--cycle;
\foreach \n in {1,...,9}{
  \fill[black] (\n) circle (\pdist); 
  \node[anchor=north east,scale=.4] at (\n) {\n};
}
\foreach \x\y in {2/3, 3/4, 4/5, 4/6, 2/6, 6/7, 7/8, 8/9}{
  \draw (\x)--(\y);
}
\end{scope}
\end{tikzpicture}
	\caption{An overview of the simplicial and cubical complexes used here and their filtrations. From left to right: Vietoris--Rips complexes at radii $r<r'$, alpha complexes at radii $r<r'$, and cubical complexes at filtration values $r=5<r'=9$ (by the ordering on the 0-cubes). On the left, 0-cubes in white are not included in the cubical complex.}
	\label{fig_examples0}
\end{figure}
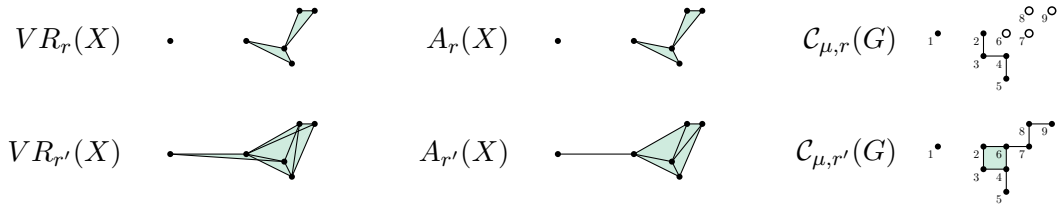 

% As sets, we have 
% \begin{align}
%     VR_r(X) & \colonequals \{\sigma \subseteq X\ :\ d(x,x')\leqslant r\ \forall\ x,x'\in \sigma\}, \label{eqn_vrdef} \\
%     A_r(X) & \colonequals \{\sigma \subseteq X\ :\ \textstyle\bigcap_{p\in \sigma} (B_r(p)\cap V(p)) \neq \emptyset\}, \label{eqn_alphadef} \\
%     \mathcal C_r(X) & \colonequals \{C\subseteq X\ :\ \exists \text{\ an isometry\ }\varphi \colon [0,r]^n\to \hull(C), n\in \Z_{\geqslant 0}, 2^n=|C|\}, \label{eqn_cubicaldef} 
% \end{align}
% with the condition that for every $k$-simplex $\sigma$ and $k$-cube $C$, their $(k-1)$-faces are also in the respective complex. 
For all of theses constructions, we must have that if a $k$-simplex or $k$-cube is in the respective complex, then every face $(k-1)$-simplex or $(k-1)$-cube must also exist in the same complex.
Then, given radii $r<r'$, we get inclusions $VR_r(X) \subseteq VR_{r'}(X)$ and $A_r(X)\subseteq A_{r'}(X)$, so we often refer the the Vietoris--Rips and alpha \emphasize{filtrations} of a metric space $(X,d)$.

\subsection{Persistent homology}

Let $K=K(X)$ be a simplicial or cubical complex on a finite metric space $(X,d)$, with a filtration
\begin{equation}\label{eqn_filtration}    
\mathcal F\colon \emptyset = K_0 \subseteq K_{r'} \subseteq \cdots \subseteq K_{r''} = K, 
\end{equation}
where $r\in \R_{\geqslant 0}$ is the increasing filtration parameter.
We consider persistent homology as a functor $PH_*\colon \R\to \Vect$ from the category of real numbers and the partial order $\leqslant$, to the category of vector spaces and linear maps.
Similarly, we consider the persistence diagram $D_*$ as the multiset of intervals in $\R_{\geqslant 0}$ indicating the nonzero parts of the indecomposable complexes in the image of the functor $PH_*$, where the asterisk $*$ denotes a fixed dimension.
When a finite set $X$ is fixed, we write $PH_*(X)$ for the functor $r\mapsto H_*(K_r)$ and $(r\leqslant s)\mapsto (H_*(K_r) \to H_*(K_s)$, and $D_X$ for the image of this functor.
% Since we are most interested in 0-dimensional and 1-dimensional homology, we write $PH_X$ and $D_X$ for brevity, to mean $PH_0(K(X))$ and $D_0(K(X))$, respectively.
For $K=\mathcal C$ the cubical complex, we are concerned with cases when the homology $H_*$ always has a fixed filtration parameter.

\begin{definition}\cite{edelsbrunner_harer}
\label{def_points2bars}
Let $X\subseteq \R^N$ be finite and ordered, and $D_X$ the persistence diagram of $PH_0(X)$ of the complex $K(X)$ with filtration $\mathcal F$. Write $\erule_X\colon X \xrightarrow{\ \simeq\ } D_X$ for the bijection defined by the \emphasize{elder rule}. 
\end{definition}

The action of the elder rule may be considered constructively as follows.
Consider $K$ with filtration $\mathcal F$, for which there is $r,\epsilon>0$ such that two distinct connected components at $K_r$ are within the same connected component at $K_{r+\epsilon}$.
This means there is an interval $[0,r')\in D_K$, with $r'\in(r,r+\epsilon)$.
In such a case, one of the two components at $K_r$ must contain an element $x_0$ ordered strictly before all the elements of the other component.
This other component (the one not containing this $x_0$) is then, by the elder rule, associated with the interval $[0,r')$, as the first component is ``older" and so will be associated with a longer interval. %\in D_0$, with $r'\in(r,r+\epsilon)$ the smallest value for which the two components are no longer distinct.

\begin{figure}[htbp]
    %\internallinenumbers
    \centering
	\newcommand\unit{.5} %width of one unit in grid
\newcommand\overlap{.25} % end overlap for grid
\begin{tikzpicture}[
  reg/.style={line width=1pt,draw=black!50},
  lab/.style={inner sep=7pt},
  alab/.style={fill=white,fill opacity=.8,text opacity=1},
  brr/.style={line width=1pt,dashed,{Rectangle[right,length=1pt,width=10pt]}-{Rectangle[left,length=1pt,width=10pt]}},
  brl/.style={line width=1pt,dashed,{Rectangle[left,length=1pt,width=10pt]}-{Rectangle[right,length=1pt,width=10pt]}}
]
% dataset
\foreach \x\y\n in {
106/138/6, 119/167/7, 84/151/8, 131/139/9, 135/173/10, 158/186/11, 186/233/12, 95/181/15, 140/213/16, 169/259/17, 203/292/18, 227/280/19, 772/171/20, 197/273/21, 255/313/24, 285/290/25, 254/286/26, 225/313/27, 175/285/28, 283/323/29, 760/148/30, 408/285/32, 404/262/33, 341/279/35, 376/294/36, 347/302/37, 320/309/38, 465/246/40, 431/289/42, 435/247/43, 370/266/45, 443/262/46, 483/269/47, 463/280/49, 500/205/50, 490/231/51, 521/218/52, 505/257/54,  380/79/56, 400/90/57, 371/57/58, 425/75/61, 443/111/62, 485/116/63, 520/109/64, 454/85/65, 548/89/67, 576/105/68, 555/111/69, 641/118/72, 642/88/73, 676/85/74, 713/100/75, 690/112/77, 664/110/78, 732/119/79, 709/133/83, 675/137/84, 732/151/85, 309/327/86, 122/197/87, 760/194/89, 801/213/90, 794/193/95, 734/176/99, 780/223/100,  315/283/109, 168/212/110, 198/250/111, 163/243/112, 403/60/115, 481/93/117, 580/77/118, 618/85/119, 510/84/121, 606/109/122}{
  \coordinate (\n) at (.01*\x,.01*\y);
  \fill[black] (\n) circle[radius=1.5pt];
  % \node[red,scale=.5] at (\n) {\n};
}
% special nodes
\node[anchor=east,inner sep=7pt] at (8) {$x_0$};
\node[anchor=east,inner sep=7pt] at (58) {$x_1$};
\draw (8) circle [radius=3pt];
\draw (58) circle [radius=3pt];
\node (x0p) at (6.9,2.8) {$x_0'$};
\node (x1p) at (6.9,2.1) {$x_1'$};
\draw[-{Straight Barb[width=4pt,length=2pt]},shorten >=4pt] (x0p.west) to [out=180,in=330] (50);
\draw[-{Straight Barb[width=4pt,length=2pt]},shorten >=4pt] (x1p.west) to [out=180,in=30] (63);
\draw[ggreen,|-|,transform canvas={shift={(-6pt,1.2pt)}}] (50) to node[ggreen,auto,swap,pos=.8] {$d(x_0',x_1')$} (63);
% barcode
\begin{scope}[shift={(10,.3)}]
\newcounter{mycounter}
\setcounter{mycounter}{0}
\foreach \x in {12.3693, 13.3417, 17, 17.088, 19.9249, 20.8087, 21.095, 21.1896, 21.8403, 22.8254, 22.8254, 23.0217, 23.0868, 23.2594, 23.3452, 23.3452, 23.3452, 23.7697, 23.7697, 24.0832, 24.1868, 24.3516, 24.6982, 25.0599, 25.0599, 25.0799, 25.0799, 25.9422, 25.9422, 25.9422, 26.0768, 26.3059, 26.3059, 26.4197, 26.4764, 26.6271, 26.8328, 26.8328, 26.8701, 26.9072, 26.9258, 26.9258, 27.0185, 27.2029, 27.2947, 27.6586, 27.6586, 27.7308, 27.8568, 27.8568, 28.1603, 28.1603, 28.2312, 28.2843, 28.6356, 29.1548, 29.1548, 29.1548, 29.2062, 29.5466, 29.7321, 30, 30.0167, 30.0832, 30.2655, 30.3645, 30.6757, 30.8058, 31.1127, 31.3847, 31.9531, 33.2415, 34.4093, 90.2552, 180}{
  \draw[line width=.2pt] (0,.04*\themycounter)--(.02*\x,.04*\themycounter);
  \stepcounter{mycounter}
}
\foreach \x\l\c in {0/0/black, 1.81/{d(x_0',x_1')}/ggreen, 3.6/{\infty}/black}{
  \draw[\c] (\x,3.1) --++ (90:.2) node[\c,above] {$\l$};
}
\end{scope}
\end{tikzpicture}
	\caption{An ordered set $X\subseteq \R^2$ (left) and the persistence diagram $D_X$ of its Vietoris--Rips filtration in degree 0 (right). The first two elements $x_0,x_1\in X$ and the elements $x_0',x_1'$ whose distance defines the second longest element of $D_X$ are emphasized.}
	\label{fig_barcodekilled}
\end{figure}
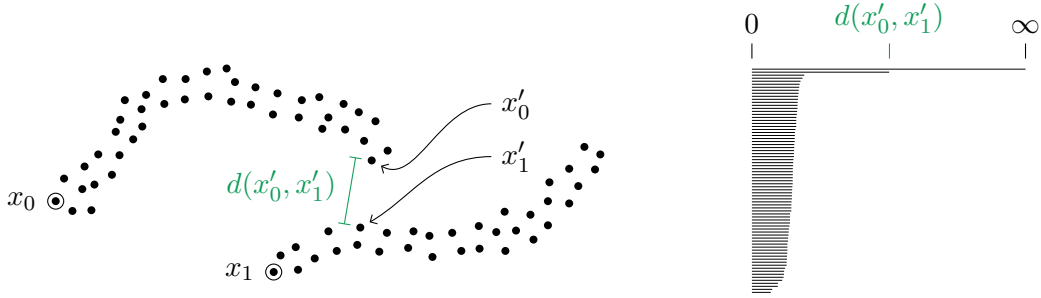 

The elder rule is defined in a similar manner for higher dimensional diagrams, but the bijection between the underlying set and the multiset of intervals only exists in dimension 0.
We now introduce some additional terminology to make precise this relationship.

\begin{definition}
\label{def_killer}
Let $X$ be ordered. Take $x,y\in X$ with $x$ ordered before $y$, and $r>0$ satisfying 
\begin{itemize}
    \item $x$ and $y$ are in different connected components of $K_r$, and 
    \item $x$ and $y$ are the earliest ordered elements in their respective components.
\end{itemize}
Let $x',y'\in X$ be in the connected components at $r$ of $x,y$, respectively, such that for every $\epsilon>0$ for which $x,y$ are in the same connected component of $K_{r+\epsilon}$, the 1-simplex with boundary $\{x',y'\}$ is also in $K_{r+\epsilon}$.
Then we say that $\erule_X(y)\in D_X$ is \emphasize{killed} by $\erule_X(x)$ and by $(x',y')$. 
\end{definition}

This definition makes precise the elements at the shortest distance between two disjoint subsets of $X$.
For example, in \cref{fig_barcodekilled}, $\erule_X(x_1)$ is killed by $\erule_X(x_0)$ at $d(x_0',x_1')$, and killed by $(x_0',x_1')$.
Note that $x',y'$ in \cref{def_killer} are not necessarily unique, but it is always true that $\erule_X(y) = [0,d(x',y')/2)$.

\begin{definition}
The \emphasize{bottleneck distance} between two persistence diagrams $D,D'$ is 
\begin{equation}
\distbottleneck(D,D') \colonequals \inf_{\varphi \colon D \to D'} \left[ \sup_{c\in D} d(c,\varphi(c))\right],
\end{equation}
where $\varphi$ is a bijection and both persistence diagrams are enriched with points countably many points $(r,r)$ for every $r\in \R_{\geqslant 0}$.
An element $c=[c_0,c_1)$ of a persistence diagram may be considered as a pair $(c_0,c_1)\in \R^2$, so the function $d\colon \R^2\times \R^2 \to \R$ is just Euclidean distance in $\R^2$.
\end{definition}

% Note that the bottleneck distance is the limiting case of the Wasserstein distance \cite{turner2006} as $p\to\infty$.

\subsection{Modifying point clouds}
\label{sec_structures}

While we endeavour to construct technically sound methods, we allow ourselves several conversational simplifications: we use \emphasize{point cloud} to mean a finite metric space and \emphasize{enrichment} of a point cloud to mean the union of the point cloud with another point cloud.

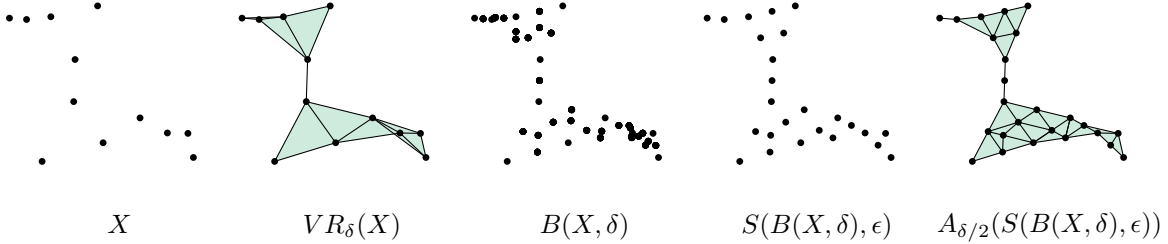
\begin{figure}[htbp]
    %\internallinenumbers
    \centering
	\newcommand\pdist{.05} % radius of small points
\newcommand\framestep{3.4} % how far apart from each the frames are
\begin{tikzpicture}[scale=.9]
% left
\foreach \x\y\n in {0.796/2.49/a, 2.88/0.417/b, 2.8/0.772/c, 1.48/2.65/d, 0.668/0.359/e, 2.5/0.778/f, 1.13/1.24/g, 0.191/2.47/h, 0.441/2.45/i, 1.56/0.634/j, 1.15/1.86/k, 2.1/1/l}{
  \coordinate (\n) at (\x,\y);
  \fill[black] (\n) circle (\pdist);
  % \node[red,anchor=north] at (\n) {\n};
}
% next left
\begin{scope}[shift={(\framestep,0)}]
\foreach \x\y\n in {0.796/2.49/a, 2.88/0.417/b, 2.8/0.772/c, 1.48/2.65/d, 0.668/0.359/e, 2.5/0.778/f, 1.13/1.24/g, 0.191/2.47/h, 0.441/2.45/i, 1.56/0.634/j, 1.15/1.86/k, 2.1/1/l}{
  \coordinate (\n) at (\x,\y);
}
\foreach \a\b\c in {a/d/k, a/h/i, a/i/k, b/c/f, b/c/l, b/f/l, c/f/l, e/g/j, f/j/l, g/j/l}{\fill[bg] (\a)--(\b)--(\c)--cycle;}
\foreach \a\b in {a/d, a/h, a/i, a/k, b/c, b/f, b/l, c/f, c/l, d/k, e/g, e/j, f/j, f/l, g/j, g/k, g/l, h/i, i/k, j/l}{\draw (\a)--(\b);}
\foreach \n in {a, b, c, d, e, f, g, h, i, j, k, l}{
  \fill[black] (\n) circle (\pdist);
}
\end{scope}
% middle
\begin{scope}[shift={(2*\framestep,0)}]
\foreach \x\y in {0.796/2.49, 2.88/0.417, 2.8/0.772, 1.48/2.65, 0.668/0.359, 2.5/0.778, 1.13/1.24, 0.191/2.47, 0.441/2.45, 1.56/0.634, 1.15/1.86, 2.1/1, 1.14/2.57, 0.494/2.48, 0.619/2.47, 0.973/2.18, 2.84/0.595, 2.69/0.598, 2.49/0.709, 2.65/0.775, 2.45/0.886, 1.31/2.25, 0.899/0.799, 1.11/0.496, 2.03/0.706, 2.3/0.889, 1.34/0.937, 1.14/1.55, 1.61/1.12, 0.316/2.46, 0.795/2.16, 1.83/0.817, 1.14/2.57, 0.494/2.48, 0.619/2.47, 0.973/2.18, 2.84/0.595, 2.69/0.598, 2.49/0.709, 2.65/0.775, 2.45/0.886, 1.31/2.25, 0.899/0.799, 1.11/0.496, 2.03/0.706, 2.3/0.889, 1.34/0.937, 1.14/1.55, 1.61/1.12, 0.316/2.46, 0.795/2.16, 1.83/0.817, 1.14/2.33, 0.476/2.47, 0.796/2.27, 2.73/0.656, 2.59/0.73, 2.49/0.732, 2.47/0.85, 1.12/0.744, 2.05/0.804, 1.6/0.958, 1.14/2.57, 0.494/2.48, 0.619/2.47, 0.973/2.18, 2.84/0.595, 2.69/0.598, 2.49/0.709, 2.65/0.775, 2.45/0.886, 1.31/2.25, 0.899/0.799, 1.11/0.496, 2.03/0.706, 2.3/0.889, 1.34/0.937, 1.14/1.55, 1.61/1.12, 0.316/2.46, 0.795/2.16, 1.83/0.817, 1.14/2.33, 0.476/2.47, 0.796/2.27, 2.73/0.656, 2.59/0.73, 2.49/0.732, 2.47/0.85, 1.12/0.744, 2.05/0.804, 1.6/0.958, 1.14/2.57, 0.494/2.48, 0.619/2.47, 0.973/2.18, 2.84/0.595, 2.69/0.598, 2.49/0.709, 2.65/0.775, 2.45/0.886, 1.31/2.25, 0.899/0.799, 1.11/0.496, 2.03/0.706, 2.3/0.889, 1.34/0.937, 1.14/1.55, 1.61/1.12, 0.316/2.46, 0.795/2.16, 1.83/0.817, 1.14/2.33, 0.476/2.47, 0.796/2.27, 2.73/0.656, 2.59/0.73, 2.49/0.732, 2.47/0.85, 1.12/0.744, 2.05/0.804, 1.6/0.958, 1.14/2.57, 0.494/2.48, 0.619/2.47, 0.973/2.18, 2.84/0.595, 2.69/0.598, 2.49/0.709, 2.65/0.775, 2.45/0.886, 1.31/2.25, 0.899/0.799, 1.11/0.496, 2.03/0.706, 2.3/0.889, 1.34/0.937, 1.14/1.55, 1.61/1.12, 0.316/2.46, 0.795/2.16, 1.83/0.817, 1.14/2.33, 0.476/2.47, 0.796/2.27, 2.73/0.656, 2.59/0.73, 2.49/0.732, 2.47/0.85, 1.12/0.744, 2.05/0.804, 1.6/0.958, 1.14/2.57, 0.494/2.48, 0.619/2.47, 0.973/2.18, 2.84/0.595, 2.69/0.598, 2.49/0.709, 2.65/0.775, 2.45/0.886, 1.31/2.25, 0.899/0.799, 1.11/0.496, 2.03/0.706, 2.3/0.889, 1.34/0.937, 1.14/1.55, 1.61/1.12, 0.316/2.46, 0.795/2.16, 1.83/0.817, 1.14/2.33, 0.476/2.47, 0.796/2.27, 2.73/0.656, 2.59/0.73, 2.49/0.732, 2.47/0.85, 1.12/0.744, 2.05/0.804, 1.6/0.958, 1.14/2.57, 0.494/2.48, 0.619/2.47, 0.973/2.18, 2.84/0.595, 2.69/0.598, 2.49/0.709, 2.65/0.775, 2.45/0.886, 1.31/2.25, 0.899/0.799, 1.11/0.496, 2.03/0.706, 2.3/0.889, 1.34/0.937, 1.14/1.55, 1.61/1.12, 0.316/2.46, 0.795/2.16, 1.83/0.817, 1.14/2.33, 0.476/2.47, 0.796/2.27, 2.73/0.656, 2.59/0.73, 2.49/0.732, 2.47/0.85, 1.12/0.744, 2.05/0.804, 1.6/0.958, 1.14/2.57, 0.494/2.48, 0.619/2.47, 0.973/2.18, 2.84/0.595, 2.69/0.598, 2.49/0.709, 2.65/0.775, 2.45/0.886, 1.31/2.25, 0.899/0.799, 1.11/0.496, 2.03/0.706, 2.3/0.889, 1.34/0.937, 1.14/1.55, 1.61/1.12, 0.316/2.46, 0.795/2.16, 1.83/0.817, 1.14/2.33, 0.476/2.47, 0.796/2.27, 2.73/0.656, 2.59/0.73, 2.49/0.732, 2.47/0.85, 1.12/0.744, 2.05/0.804, 1.6/0.958}{\fill[black] (\x,\y) circle (\pdist);}
\end{scope}
% right
\begin{scope}[shift={(3*\framestep,0)}]
\foreach \x\y in {0.796/2.49, 2.88/0.417, 2.8/0.772, 1.48/2.65, 0.668/0.359, 2.5/0.778, 1.13/1.24, 0.191/2.47, 0.441/2.45, 1.56/0.634, 1.15/1.86, 2.1/1, 1.14/2.57, 0.973/2.18, 2.69/0.598, 1.31/2.25, 0.899/0.799, 1.11/0.496, 2.03/0.706, 2.3/0.889, 1.34/0.937, 1.14/1.55, 1.61/1.12, 1.83/0.817, 1.12/0.744}{\fill[black] (\x,\y) circle (\pdist);}
\end{scope}
% far right
\begin{scope}[shift={(4*\framestep,0)}]
\foreach \x\y\n in {0.796/2.49/1, 2.88/0.417/2, 2.8/0.772/3, 1.48/2.65/4, 0.668/0.359/5, 2.5/0.778/6, 1.13/1.24/7, 0.191/2.47/8, 0.441/2.45/9, 1.56/0.634/10, 1.15/1.86/11, 2.1/1/12, 1.14/2.57/13, 0.973/2.18/14, 2.69/0.598/15, 1.31/2.25/16, 0.899/0.799/17, 1.11/0.496/18, 2.03/0.706/19, 2.3/0.889/20, 1.34/0.937/21, 1.14/1.55/22, 1.61/1.12/23, 1.83/0.817/24, 1.12/0.744/25}{
  \coordinate (\n) at (\x,\y);
}
\fill[bg] (9) -- (1) -- (13) -- (4) -- (16) -- (11) -- (14) -- cycle;
\fill[bg] (7) -- (23) -- (12) -- (20) -- (3) -- (2) -- (15) -- (6) -- (19) -- (10) -- (18) -- (5) -- (17) -- cycle;
\foreach \a\b in {8/9, 9/1, 1/13, 13/4, 1/14, 13/14, 13/16, 14/16, 14/11, 16/11, 16/4,  11/22, 22/7, 7/21, 17/25, 25/21, 5/17,  5/18, 17/18, 25/18, 24/10, 18/10, 10/24, 23/24, 24/19, 24/12, 12/19, 12/20, 20/6, 6/15, 15/3, 15/2, 12/19, 23/12, 21/10, 21/24, 21/23, 7/23, 20/19, 6/3, 25/10, 17/21, 17/7, 3/2, 10/19, 19/6, 20/3, 9/14}{
  \draw(\a)--(\b);
}
\foreach \n in {1,...,25}{
  \fill[black] (\n) circle (\pdist);
  %\node[scale=.5,red] at (\n) {\n};
}
\end{scope}
%%
%% labels
%%
\begin{scope}[shift={(1.8,-.6)}]
\node (l1) at (0*\framestep,0) {\small\vphantom{Ap}$X$};
\node (l2) at (1*\framestep,0) {\small\vphantom{Ap}$VR_\delta(X)$};
\node (l3) at (2*\framestep,0) {\small\vphantom{Ap}$B(X,\delta)$};
\node (l4) at (3*\framestep,0) {\small\vphantom{Ap}$S(B(X,\delta),\epsilon)$};
\node (l5) at (4*\framestep,0) {\small\vphantom{Ap}$A_{\delta/2}(S(B(X,\delta),\epsilon))$};
\end{scope}
\end{tikzpicture}
	\caption{An overview of point cloud modifications. From left to right: A set $X\subseteq \R^2$, its Vietoris--Rips complex, its barycentric subdivision, the sparsification of its barycentric subdivision, and the alpha complex of the sparsification of its barycentric subdivision. The values chosen for this example are $\delta = .3\cdot \diam(X)$ and $\varepsilon=.06\cdot \diam(X)$.}
	\label{fig_barysparseex}
\end{figure} 

Given a simplicial complex $S$ on a set $X$, the \emphasize{barycentric subdivision} of $S$ is a simplicial complex constructed by decomposing each $n$-simplex in $S$ into a collection of smaller simplices (a standard method fully described in, for example, \cite{hatcher}).
For our purposes, we are interested in the barycentric subdivision of only 1-simplices and 2-simplices of $S$, but our methods extend to any dimension.
Given a fixed length $\delta>0$, we write $B(X,\delta)$ for the 0-simplices of the barycentric subdivision of only 1-simplices and 2-simplices of the Vietoris--Rips simplicial complex $VR_\delta(X)$.
As a set,
\begin{equation}
\label{eqn_barycentricsubdivision}
B(X,\delta) \colonequals X \cup \bigcup_{\sigma=\{x_0,x_1\} \in VR_\delta(X)_1} \left\{ \frac{x_0+x_1}2 \right\} \cup \bigcup_{\sigma=\{x_0,x_1,x_2\} \in  VR_\delta(X)_2} \left\{ \frac{x_0+x_1+x_2}3 \right\},
\end{equation}
where $VR_\delta(X)_k$ is the $k$-skeleton, or the collection of $k$-simplices of $VR_\delta(X)$. 
The elements in $B(X,\delta)$ are ordered first as in $X$, then the midpoints of the 1-simplices, then the centroids of the 2-simplices, and order within each collection induced lexicographically from the order of $X$.

% The barycentric subdivision increases the number of data points, whereas \emphasize{sparsification} decreases the number.
The notion of \emphasize{sparsification} that we use follows the approach implemented in the GUDHI library \cite{gudhi:SubSampling}, though this is not the only method to subsample a point cloud.
Sparsification is also sometimes known as computing \emphasize{landmarks} for a point cloud.

\begin{definition}
\label{def_sparsification}
Given a fixed length $\varepsilon>0$, we write $S(X,\varepsilon)\subseteq X$ for the subset of an ordered set $X$ for which $d(x,x') > \varepsilon$, for every $x\neq x'\in S(X,\varepsilon)$. This set is unique if constructed by proceeding through $X$ by its order, at $x_i$ removing all $x_{j>i}$ with $d(x_i,x_j)\leqslant \varepsilon$.
\end{definition}

\begin{remark}
Note that computing landmarks in this way with two different orders on $X$ will result in point clouds within a Hausdorff distance of $\varepsilon$ of each other.
This follows as every $x\in X$ must have at least one landmark in its $\varepsilon$-ball $B_\varepsilon(X)$, for any ordering of $X$.
\end{remark}

We refer to the image of the natural inclusion of $\Z^N$ into $\R^N$, composed with a translation $z\in \R^N$ and scaling factor $\mu\in \R$ as a $(\mu,z)$-grid, or simply a  \emphasize{grid} when the parameters $\mu,z$ are clear from context or not relevant.
As a set,
\begin{equation}
    \label{eqn:grid}
    G(\mu,z) \colonequals \left\{(n_1\mu,\dots,n_N\mu) + z\in \R^N\ :\ n_i\in \Z\ \forall\ i\right\} \equiv \Z^N,
\end{equation}
and we write $\mu\Z^N$ for brevity, when the $z$ is implied or not relevant.
For our work, the $z$ is indeed not relevant, and may be considered as fixed at $0\in \R^N$ throughout, so we say ``$\mu$-grid'' to refer to $G(\mu,z)$.

\begin{figure}[htbp]
    %\internallinenumbers
    \centering
	\newcommand\pdist{.04} % radius of black points
\newcommand\gdist{1.5pt} % radius of green points
\newcommand\framestep{4} % how far apart from each the frames are
\newcommand\unit{.35} % width of one unit in grid
\begin{tikzpicture}[
  gridline/.style={bg,line width=.8pt},
  shapeline/.style={fg,line width=1pt}
]
% left left
\foreach \x in {-1,...,7}{\draw[gridline] (\x*\unit,-.5*\unit)--(\x*\unit,7.5*\unit);}
\foreach \y in {0,...,7}{\draw[gridline] (-1.5*\unit,\y*\unit)--(7.5*\unit,\y*\unit);}
\foreach \x\y in {0/5, 1/5, 2/5, 3/5, 2/4, 2/3, 2/2, 3/2, 4/2, 5/2, 1/1, 2/1, 3/1, 4/1, 5/1, 6/1}{
  \fill[fg] (\x*\unit,\y*\unit) circle[radius=\gdist];
}
\foreach \x\y in {0.796/2.49, 2.88/0.417, 2.8/0.772, 1.48/2.65, 0.668/0.359, 2.5/0.778, 1.13/1.24, 0.191/2.47, 0.441/2.45, 1.56/0.634, 1.15/1.86, 2.1/1, 1.14/2.57, 0.973/2.18, 2.69/0.598, 1.31/2.25, 0.899/0.799, 1.11/0.496, 2.03/0.706, 2.3/0.889, 1.34/0.937, 1.14/1.55, 1.61/1.12, 1.83/0.817, 1.12/0.744}{
  \fill[black] (\x*.7+.1,\y*.7+.2) circle (\pdist);
}
\node at (3*\unit,-2*\unit) {\vphantom{Ap}$X$ \hspace{.3cm} {\color{fg}$G(X,\mu)$}};
% % left
% \begin{scope}[shift={(\framestep,0)}]
% \foreach \x in {-1,...,7}{\draw[gridline] (\x*\unit,-.5*\unit)--(\x*\unit,7.5*\unit);}
% \foreach \y in {0,...,7}{\draw[gridline] (-1.5*\unit,\y*\unit)--(7.5*\unit,\y*\unit);}
% \foreach \x\y in {0/5, 1/5, 2/5, 3/5, 2/4, 2/3, 2/2, 3/2, 4/2, 5/2, 1/1, 2/1, 3/1, 4/1, 5/1, 6/1}{
%   \coordinate (\x\y) at (\x*\unit,\y*\unit);
% }
% \fill[bg] (15) -- (35) -- (24) -- cycle;
% \fill[bg] (11) -- (61) -- (52) -- (32) -- (23) -- (22) -- cycle;
% \foreach \a\b in {11/21, 21/31, 31/41, 41/51, 51/61, 22/11, 22/21, 22/31, 22/32, 32/31, 32/41, 32/42, 42/41, 42/51, 42/52, 52/51, 52/61, 23/32, 24/22, 05/15, 15/25, 15/24, 25/24, 25/35, 35/24}{
%   \draw[shapeline] (\a) -- (\b);
% }
% \foreach \x\y in {0/5, 1/5, 2/5, 3/5, 2/4, 2/3, 2/2, 3/2, 4/2, 5/2, 1/1, 2/1, 3/1, 4/1, 5/1, 6/1}{
%   \fill[fg] (\x*\unit,\y*\unit) circle[radius=\gdist];
% }
% \node at (3*\unit,-2*\unit) {\vphantom{Ap}$A_{\mu/\sqrt2}(G(X,\mu))$};
% \end{scope}
% middle
\begin{scope}[shift={(\framestep,0)}]
\foreach \x in {-1,...,7}{\draw[gridline] (\x*\unit,-.5*\unit)--(\x*\unit,7.5*\unit);}
\foreach \y in {0,...,7}{\draw[gridline] (-1.5*\unit,\y*\unit)--(7.5*\unit,\y*\unit);}
\foreach \x\y in {0/5, 1/5, 2/5, 3/5, 2/4, 2/3, 2/2, 3/2, 4/2, 5/2, 1/1, 2/1, 3/1, 4/1, 5/1, 6/1}{
  \coordinate (\x\y) at (\x*\unit,\y*\unit);
}
\fill[bg] (21) -- (51) -- (52) -- (22) -- cycle;
\foreach \a\b in {11/21, 21/31, 31/41, 41/51, 51/61, 22/21, 22/32, 32/31, 32/42, 42/41, 42/52, 52/51, 24/22, 05/15, 15/25, 25/24, 25/35}{
  \draw[shapeline] (\a) -- (\b);
}
\foreach \x\y in {0/5, 1/5, 2/5, 3/5, 2/4, 2/3, 2/2, 3/2, 4/2, 5/2, 1/1, 2/1, 3/1, 4/1, 5/1, 6/1}{
  \fill[fg] (\x*\unit,\y*\unit) circle[radius=\gdist];
}
\node at (3*\unit,-2*\unit) {\vphantom{Ap}$C_\mu(G(X,\mu))$};
\end{scope}
% right
\begin{scope}[shift={(2*\framestep,0)}]
\foreach \x in {-1,...,7}{\draw[gridline] (\x*\unit,-.5*\unit)--(\x*\unit,7.5*\unit);}
\foreach \y in {0,...,7}{\draw[gridline] (-1.5*\unit,\y*\unit)--(7.5*\unit,\y*\unit);}
\foreach \x\y in {
-1/7, 0/7, 1/7, 2/7, 3/7, 4/7, 5/7, 6/7, 7/7,
-1/6, 0/6, 1/6, 2/6, 3/6, 4/6, 5/6, 6/6, 7/6,
-1/5, 4/5, 5/5, 6/5, 7/5,
-1/4, 0/4, 1/4, 3/4, 4/4, 5/4, 6/4, 7/4,
-1/3, 0/3, 1/3, 3/3, 4/3, 5/3, 6/3, 7/3,
-1/2, 0/2, 1/2, 6/2, 7/2,
-1/1, 0/1, 7/1,
-1/0, 0/0, 1/0, 2/0, 3/0, 4/0, 5/0, 6/0, 7/0
}{
 \fill[ggreen] (\x*\unit,\y*\unit) circle[radius=\gdist];
}
\node at (3*\unit,-2*\unit) {\vphantom{Ap}$\mu\Z^2\setminus G(X,\mu)$};
\end{scope}
% right right
\begin{scope}[shift={(3*\framestep,0)}]
\foreach \x in {-1,...,7}{\draw[gridline] (\x*\unit,-.5*\unit)--(\x*\unit,7.5*\unit);}
\foreach \y in {0,...,7}{\draw[gridline] (-1.5*\unit,\y*\unit)--(7.5*\unit,\y*\unit);}
\foreach \x in {-1,...,7}{
  \foreach \y in {0,...,7}{
    \coordinate (\x\y) at (\x*\unit,\y*\unit);
  }
}
\fill[bg] (-17) -- (77) -- (72) -- (62) -- (63) -- (33) -- (34) -- (44) -- (46) -- (-16) -- cycle; 
\fill[bg] (-10) -- (00) -- (02) -- (12) -- (14) -- (-14) -- cycle;
\foreach \a\b in {-10/70, -11/01, -12/12, 62/72, -13/13, 33/73, -14/14, 34/74, 45/75, -16/76, -17/77, -10/-17, 00/04, 06/07, 12/14, 16/17, 26/27, 33/34, 36/37, 43/47, 53/57, 62/67, 70/77}{
  \draw[shapeline] (\a) -- (\b);
}
\foreach \x\y in {
-1/7, 0/7, 1/7, 2/7, 3/7, 4/7, 5/7, 6/7, 7/7,
-1/6, 0/6, 1/6, 2/6, 3/6, 4/6, 5/6, 6/6, 7/6,
-1/5, 4/5, 5/5, 6/5, 7/5,
-1/4, 0/4, 1/4, 3/4, 4/4, 5/4, 6/4, 7/4,
-1/3, 0/3, 1/3, 3/3, 4/3, 5/3, 6/3, 7/3,
-1/2, 0/2, 1/2, 6/2, 7/2,
-1/1, 0/1, 7/1,
-1/0, 0/0, 1/0, 2/0, 3/0, 4/0, 5/0, 6/0, 7/0
}{
 \fill[ggreen] (\x*\unit,\y*\unit) circle[radius=\gdist];
}
\node at (3*\unit,-2*\unit) {\vphantom{Ap}$\mathcal C_\mu(\mu\Z^2\setminus G(X,\mu))$};
\end{scope}
\end{tikzpicture}
	\caption{An overview of constructions on grids. From left to right: A set $X\subseteq \R^2$ (taken from \cref{fig_barysparseex}, second right) overlaid by the grid $G(X,\mu)$, the cubical complex on the grid, the complement of the grid, and the cubical complex on the complement.}
	\label{fig_gridcompex}
\end{figure}
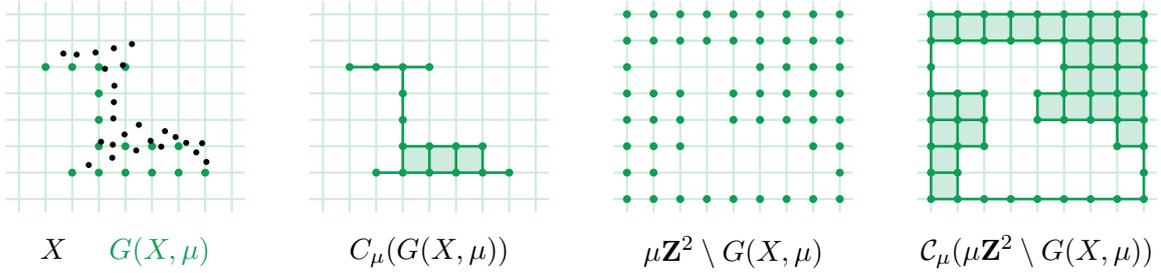

Given a grid $G(X,\mu)\subseteq \mu \Z^N$, the \emphasize{subdivision} $G_s(X,\mu)$ is the subset of $\frac\mu2\Z^N$ in which each element of $G(X,\mu)$ has been replaced by $2^N$ elements, by subdividing the $N$-cube of side length $\mu$ into $2^N$ $N$-cubes of side length $\frac\mu2$.
We also define the \emphasize{thickening} $G_t(X,\mu)$ as all elements in $\frac\mu2\Z^N$ at most a $d_\infty$-distance (in $\R^N$) of $\frac\mu2$ away from $G(X,\mu)$
As sets,
\begin{align}
    \label{eqn:gridsubdivision}
    G_s(X,\mu) & \colonequals \textstyle \bigcup_{\genfrac{}{}{0pt}{3}{x\in G(X,\mu)}{y\in \{0,1\}^N}} \left\{x+ \frac{\mu y_1}{2}e_1 + \cdots + \frac{\mu y_N}{2}e_N \right\}, \\
    \label{eqn:gridthickening}
    G_t(X,\mu) & \colonequals \left\{ x\in {\textstyle\frac\mu2}\Z^N\ :\ \text{there exists\ } x'\in G(X,\mu) \text{\ with\ } d_\infty(x,x') \leqslant {\textstyle\frac\mu2}\right\},
\end{align}
where $e_i$ is the $i$th standard basis coordinate vector (all zeros except for 1 in the $i$th coordinate).
A visual example of these constructions given in \cref{fig_gridsubdivisions}.

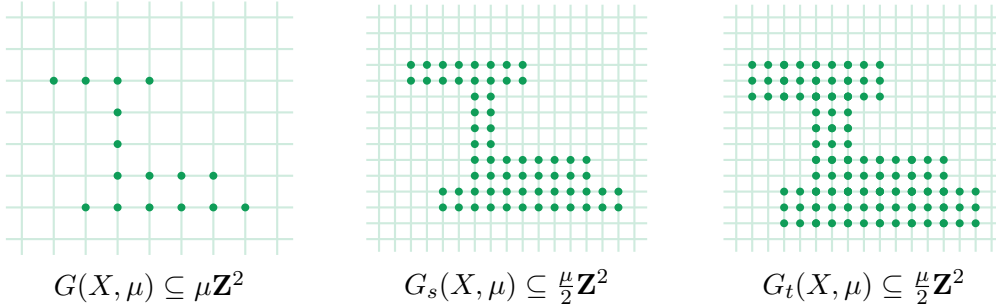
\begin{figure}[htbp]
    %\internallinenumbers
    \centering
	\newcommand\pdist{.04} % radius of black points
\newcommand\gdist{1.5pt} % radius of green points
\newcommand\framestep{4.7} % how far apart from each the frames are
\newcommand\unit{.42} % width of one unit in grid
\begin{tikzpicture}[
  gridline/.style={bg,line width=.8pt},
  shapeline/.style={fg,line width=1pt}
]
% original
\foreach \x in {-1,...,7}{\draw[gridline] (\x*\unit,-.5*\unit)--(\x*\unit,7.5*\unit);}
\foreach \y in {0,...,7}{\draw[gridline] (-1.5*\unit,\y*\unit)--(7.5*\unit,\y*\unit);}
\foreach \x\y in {0/5, 1/5, 2/5, 3/5, 2/4, 2/3, 2/2, 3/2, 4/2, 5/2, 1/1, 2/1, 3/1, 4/1, 5/1, 6/1}{
  \fill[fg] (\x*\unit,\y*\unit) circle[radius=\gdist];
}
\node at (3*\unit,-1.5*\unit) {$G(X,\mu)\subseteq \mu\Z^2$};
% subdivision
\begin{scope}[shift={(\framestep,0)}]
\foreach \x in {-1,...,7}{\draw[gridline] (\x*\unit,-.4*\unit)--(\x*\unit,7.4*\unit);}
\foreach \x in {-1,...,6}{\draw[gridline] (\x*\unit+.5*\unit,-.4*\unit)--(\x*\unit+.5*\unit,7.4*\unit);}
\foreach \y in {0,...,7}{\draw[gridline] (-1.4*\unit,\y*\unit)--(7.4*\unit,\y*\unit);}
\foreach \y in {0,...,6}{\draw[gridline] (-1.4*\unit,\y*\unit+.5*\unit)--(7.4*\unit,\y*\unit+.5*\unit);}
\foreach \x\y in {0/5, 1/5, 2/5, 3/5, 2/4, 2/3, 2/2, 3/2, 4/2, 5/2, 1/1, 2/1, 3/1, 4/1, 5/1, 6/1}{
  \fill[fg] (\x*\unit,\y*\unit) circle[radius=\gdist];
  \fill[fg] (\x*\unit,\y*\unit+\unit/2) circle[radius=\gdist];
  \fill[fg] (\x*\unit+\unit/2,\y*\unit) circle[radius=\gdist];
  \fill[fg] (\x*\unit+\unit/2,\y*\unit+\unit/2) circle[radius=\gdist];
}
\node at (3*\unit,-1.5*\unit) {$G_s(X,\mu)\subseteq \frac\mu2\Z^2$};
\end{scope}
% thickening
\begin{scope}[shift={(2*\framestep,0)}]
\foreach \x in {-1,...,7}{\draw[gridline] (\x*\unit,-.4*\unit)--(\x*\unit,7.4*\unit);}
\foreach \x in {-1,...,6}{\draw[gridline] (\x*\unit+.5*\unit,-.4*\unit)--(\x*\unit+.5*\unit,7.4*\unit);}
\foreach \y in {0,...,7}{\draw[gridline] (-1.4*\unit,\y*\unit)--(7.4*\unit,\y*\unit);}
\foreach \y in {0,...,6}{\draw[gridline] (-1.4*\unit,\y*\unit+.5*\unit)--(7.4*\unit,\y*\unit+.5*\unit);}
\foreach \x\y in {0/5, 1/5, 2/5, 3/5, 2/4, 2/3, 2/2, 3/2, 4/2, 5/2, 1/1, 2/1, 3/1, 4/1, 5/1, 6/1}{
  \foreach \xshift in {-1,0,1}{
    \foreach \yshift in {-1,0,1}{
      \fill[fg] (\x*\unit+\xshift*\unit*.5,\y*\unit+\yshift*\unit*.5) circle[radius=\gdist];
    }
  }
}
\node at (3*\unit,-1.5*\unit) {$G_t(X,\mu)\subseteq \frac\mu2\Z^2$};
\end{scope}
\end{tikzpicture}
	\caption{From left to right: A subset $G(X,\mu)$ of a $\mu$-grid (taken from \cref{fig_gridcompex}, left), the subdivision of the subset, and the thickening of the subset.}
	\label{fig_gridsubdivisions}
\end{figure}

% Given a $\mu$-grid $G(\mu)\subseteq\R^N$, the \emphasize{filled grid} $G_{\text{filled}}(\mu)\subseteq \R^N$ is the set $\{x\in \R^N\ :\ \lfloor x\rfloor \in G(\mu)\}$, where the floor function is considered on each coordinate separately.
% For any $\mu$-grid $G(\mu)$, there is another $\mu$-grid $H(\mu)$ for which $G(\mu)\sqcup H(\mu) = \Z^N$ and $G_{\text{filled}}(\mu) = H_{\text{filled}}(\mu)^c$.

We may constructively define the thickening by taking the $2^N$ copies of the subdivision $G_s(X,\mu)$, each of which has been shifted by a linear combination of the standard basis vectors $e_i$, where the shift is determined by an element of $\{0,-\frac12\}^N$.

% \begin{remark}\label{rem_alphagrid}
% When constructing an alpha complex on a (subset of a) $\mu$-grid $G\subseteq \R^2$, the definition in \cref{eqn_alphadef} implies that whenever the four corners of a square with edge length $\mu$ appear in $G$, the alpha complex $A_r(G)$ should contain two overlapping 2-simplices and two diagonal 1-simplices, for every $r\geqslant \mu/\sqrt 2$.
% Since this is redundant information in the context of homological computations, we construct only one pair of the 2-simplices, resolving the tie lexicographically.
% \end{remark}

\begin{remark}\label{rem_gridmap}
Given a subset $X\subseteq \R^N$, another way to consider the $(\mu,z)$-grid $G(X,\mu)$ is as the subset of $\Z^N$ scaled by $\mu$, for which $x'\in G(X,\mu)$ whenever there is some $x\in X$ such that $x-x'\in [0,\mu)^N$.
This defines a unique map $X\to \mu\Z^N$, whose image is precisely $G(X,\mu)$.
This map will be instrumental in proving \cref{thm_complement}.
\end{remark}

\begin{remark}\label{rem_gridconstructions}
Before presenting our main results, we make some final remarks about the construction choices made.
\begin{itemize}
    \item The presented construction of a grid $G(X,\mu)$ from a set $X\subseteq \R^N$ follows one of many constructions, and is natural perhaps only by computational considerations, as computing this grid is done by division without a remainder. 
    Another common construction would be to find the nearest element in $\mu\Z^N$ using the Euclidean metric, and we note that this is not equivalent to our construction.
    % \item The choice to construct the alpha complex from a (subset of a) grid, instead of (for example) the Vietoris--Rips complex, comes from the main application being relevant to low-dimensional grids.
    % Using the alpha complex gives much less simplices than the Vietoris--Rips complex, is still reasonably fast to compute, and is homotopically equivalent to the \v Cech complex (whose flag complex is the Vietoris--Rips complex).
    \item Given a finite metric space $(X,d)$ and $X'\subseteq X$, the Vietoris--Rips filtration on $X$ is always a refinement (maintains the order of spaces) of the Vietoris--Rips filtration on $X'$.
    This is also true for cubical complexes when $G,G'\subseteq \mu\Z^2$, but it is not true for the alpha complex.
\end{itemize}
\end{remark}

\section{Stability}
\label{sec_mainresults}

Now we describe the effect on 0-dimensional persistent homology of adding new elements and removing existing elements from a finite set in Euclidean space.

\subsection{Bounding the bottleneck distance}
Our first three results concern the bottleneck distance between the 0-dimensional persistence diagram of the Vietoris--Rips filtration on two related point clouds.
The first result is about the stability of the barycentric subdivision method $B(X,\delta)$.

\begin{theorem}\label{thm_mainbary}
    Let $X\subseteq \R^N$ be a finite set, and $B(X,\delta) \supseteq X$ the vertices of the barycentric subdivision of 1-simplices and 2-simplices of $VR_\delta(X)$. Let $D_X,D_{B(X,\delta)}$ be persistence diagrams of the Vietoris--Rips filtrations in degree 0 on $X$ and $B(X,\delta)$, respectively. Then
    \begin{equation}
    \distbottleneck\left( D_X,D_{B(X,\delta)} \right) \leqslant \frac{\delta}{4}.
    \end{equation}
\end{theorem}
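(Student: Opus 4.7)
The plan is to construct a bijection $\varphi\colon D_X\to D_{B(X,\delta)}$, augmented by diagonal copies, whose $\ell_\infty$-displacement at every bar is at most $\delta/4$. Since $X\subseteq B(X,\delta)$ and the ordering on $B(X,\delta)$ places $X$-vertices before midpoints and midpoints before centroids, I would send $\erule_X(x)\mapsto\erule_{B(X,\delta)}(x)$ for every $x\in X$ and match every bar of a new vertex $p\in B(X,\delta)\setminus X$ to its projection onto the diagonal.

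I would first handle the bars of the new vertices. A midpoint $p=(u+v)/2$ of an edge $\{u,v\}\in VR_\delta(X)_1$ satisfies $d(p,u)=d(p,v)=d(u,v)/2\leq\delta/2$, and since $u,v$ precede $p$ in the order, the elder rule forces $\erule_{B(X,\delta)}(p)=[0,r_p)$ with $r_p\leq d(u,v)/4\leq\delta/4$, so its $\ell_\infty$-distance to the diagonal is at most $\delta/8$. For a centroid $p=(u+v+w)/3$, the identity $p-m_{uv}=(2w-u-v)/6$ gives $d(p,m_{uv})\leq\delta/3$, and since midpoints precede centroids in the order I get $r_p\leq\delta/6$, well within $\delta/4$.

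The core step is to show $r_x^X - r_x^{B(X,\delta)}\leq \delta/4$ for each $x\in X$, where $r_x^\bullet$ denotes the death value of $\erule_\bullet(x)$. I would split based on the killing pair $(x',y')$ from \cref{def_killer}. If $d(x',y')\leq\delta$, equivalently $r_x^X\leq\delta/2$, then $m_{x'y'}\in B(X,\delta)$ and the chain $x'\to m_{x'y'}\to y'$ realises the analogous merge at filtration $d(x',y')/4=r_x^X/2$, so $r_x^{B(X,\delta)}\leq r_x^X/2$ and the shortening is at most $r_x^X/2\leq\delta/4$. If $d(x',y')>\delta$, no new vertex sits on the killing edge, and I claim $r_x^{B(X,\delta)}=r_x^X$.

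The hard part is the matching lower bound $r_x^{B(X,\delta)}\geq r_x^X/2$ in the short case (respectively $\geq r_x^X$ in the long case). I would argue that at filtrations below the threshold no edge of $VR_r(B(X,\delta))$ can bridge the two pre-merge components $C_x,C_e$ of $VR_{r_x^X-\varepsilon}(X)$. By minimality of the killing pair every direct cross $X$-edge has length at least $2r_x^X$; every cross midpoint $m_{ab}$ with $a\in C_x$, $b\in C_e$ has $d(m_{ab},a)=d(a,b)/2\geq r_x^X$, which rules out the edges $\{a,m_{ab}\}$ and $\{b,m_{ab}\}$ for $r<r_x^X/2$; and any centroid-based cross path must use one such midpoint edge, since every 2-simplex of $VR_\delta(X)$ whose vertex set spans both components contains at least one cross edge of length $\geq 2r_x^X$. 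I expect the centroid step to be the main obstacle, because a centroid can in principle lie very close to an $X$-point on the opposite side; handling that case forces one to use that the two edges of the defining 2-simplex incident to that vertex are themselves cross edges of length at least $2r_x^X$, so the centroid still cannot reach the far component before filtration $r_x^X/2$. Combining these estimates gives $\distbottleneck(D_X,D_{B(X,\delta)})\leq \delta/4$, and three collinear points at spacing $\delta$ achieve equality.
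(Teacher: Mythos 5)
Your matching is the same as the paper's---pair $\erule_X(x)$ with $\erule_{B(X,\delta)}(x)$ and push the bars of the new vertices to the diagonal---and your estimates for the midpoint and centroid bars are correct. The gap is in the core step: both lower bounds your case split relies on are false. For the long case, take $X=\{b,a_1,a_2\}\subseteq\R^2$ with $b=(0.9\delta,0)$, $a_1=(0,\delta/2)$, $a_2=(0,-\delta/2)$, ordered $b,a_1,a_2$. The only positive-dimensional simplex of $VR_\delta(X)$ is $\{a_1,a_2\}$, so $B(X,\delta)=X\cup\{(0,0)\}$. The bar of $a_1$ dies in $D_X$ at $\sqrt{1.06}\,\delta/2\approx 0.515\delta$, with a killing pair of length $\approx 1.03\delta>\delta$, yet in $D_{B(X,\delta)}$ it dies at $0.45\delta$ through $a_1\to(0,0)\to b$. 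The danger is not new vertices on the killing edge: a midpoint (or centroid) of a simplex lying entirely inside one pre-merge component can protrude into the gap and be closer to the other component than any point of $X$ is, so $r_x^{B(X,\delta)}=r_x^X$ fails in the long case (here by about $0.065\delta$---harmless for the theorem, fatal for your dichotomy).

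The short-case bound $r_x^{B(X,\delta)}\geq r_x^X/2$ fails as well, because $\delta$ may be much larger than $2r_x^X$, and then a chain of several protruding new vertices crosses the gap in steps much shorter than $r_x^X$; your reduction ``any centroid-based cross path must use a midpoint edge'' is also invalid, since a path may run vertex--centroid--centroid--vertex. Concretely, with $\delta=10$, order $y=(2.05,0)$ first, $x=(0,0)$ second, and add $(0.7,\pm 1.9)$ and $(1.2,\pm 1.85)$: then $r_x^X\approx 1.018$, but the midpoints $(0.7,0)$ and $(1.2,0)$ of the two vertical pairs (both pairs lie inside $x$'s pre-merge component, with lengths $3.8,3.7\leq\delta$) give the path $x\to(0.7,0)\to(1.2,0)\to y$ with edges $0.7,\,0.5,\,0.85$, hence $r_x^{B(X,\delta)}\leq 0.425<r_x^X/2\approx 0.509$ (adding centroids pushes it below $0.29$). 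So the two exact bounds your strategy needs are simply not available. What the theorem requires, and what the paper establishes, is only the per-bar estimate $r_x^X-r_x^{B(X,\delta)}\leq\delta/4$, proved not by a lower bound of the form $r_x^X/2$ but by a covering argument: any point within reach of the midpoint of $\{x_a,x_b\}$ with $d(x_a,x_b)\leq\delta$ at filtration $s$ is within reach of $x_a$ and $x_b$ at filtration $s+\delta/4$ (the ball of radius $r$ about the midpoint lies in the balls of radius $r+\delta/2$ about the endpoints), so replacing new vertices by vertices of their defining simplices delays any merge by at most $\delta/4$. Your argument would need to be rebuilt along those lines (including the centroid case) rather than via the claimed exact lower bounds.
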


\begin{proof}
There is a natural inclusion $\varphi\colon D_X \hookrightarrow D_{B(X,\delta)}$, given by (the inverse of) $\erule_X$ and $\erule_{B(X,\delta)}$ from \cref{def_points2bars}, and by the inclusion $\iota$ of $X$ into $B(X,\delta)$.
This map is represented as the vertical dashed line in the commutative diagram
\begin{equation}
    \label{diag_barydiag}
    \newcommand\hstretch{4}
\newcommand\vstretch{1.5}
\begin{tikzpicture}[baseline=-2pt]
% NODES
\node (x) at (0,\vstretch) {$X$};
\node (bx) at (0,0) {$B(X,\delta)$};
\node (dx) at (\hstretch,\vstretch) {$D_X$};
\node (dbx) at (\hstretch,0) {$D_{B(X,\delta)}$};
% ARROWS
\draw[->] (x) to node[above] {$\erule_X$} node[below] {$\simeq$} (dx);
\draw[->] (bx) to node[above] {$\erule_{B(X,\delta)}$} node[below] {$\simeq$} (dbx);
\draw[{Hooks[right]}->] (x) to node[left] {$\iota$} (bx);
\draw[{Hooks[right]}->,dashed] (dx) to node[right] {$\varphi$} (dbx);
\end{tikzpicture}.
\end{equation}
Consider an interval in $D_{B(X,\delta)}\setminus \varphi(D_X)$.
If this interval is associated with the midpoint of a 1-simplex, then it must be a subinterval of $[0,\delta/4)$.
This follows from the definition of the barycentric subdivision at a distance $\delta$ and as the points in $X$ are indexed lower than the points in $B(X,\delta)\setminus X$.
If this interval is associated with the centroid of a 2-simplex, then it must be a subinterval of $[0,\delta/4\sqrt{3})$, as the centroid of a triangle with side lengths no longer than $\delta$ is at most $\delta/2\sqrt{3}$ away from any one of the midpoints of the triangle's edges.
These configurations are visually presented in \cref{fig_baryproof}.

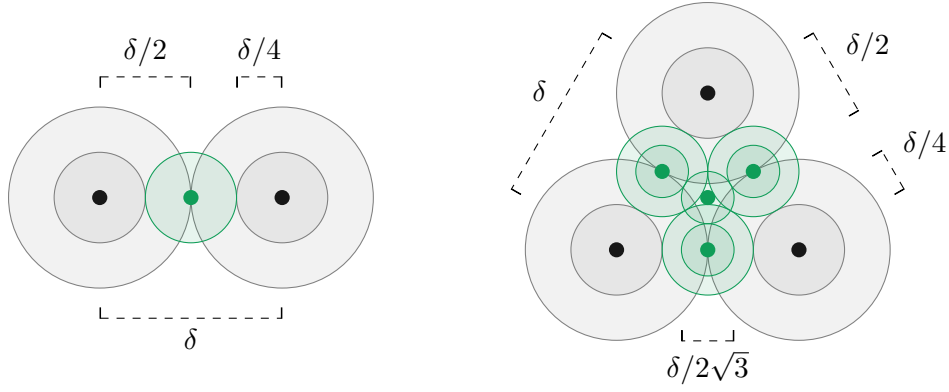
\begin{figure}[htbp]
    %\internallinenumbers
    \centering
	\newcommand\hstep{1.2} % distance between points (=delta/2)
\newcommand\rdist{.8} % small r distance (radius of small circle
\newcommand\labdist{1.6} % distance of drawn lengths from points
\begin{tikzpicture}[
  reg/.style={draw=black!50},
  regf/.style={fill=gray,fill opacity=.1},
  greg/.style={draw=fg},
  gregf/.style={fill=fg,fill opacity=.1},
  brr/.style={dashed,{Rectangle[right,length=.4pt,width=8pt]}-{Rectangle[left,length=.4pt,width=8pt]}},
  brl/.style={dashed,{Rectangle[left,length=.4pt,width=8pt]}-{Rectangle[right,length=.4pt,width=8pt]}}
]
% left
\foreach \x\n\c in {0/a/black, 2*\hstep/b/black}{
  \coordinate (\n) at (\x,0);
  \fill[\c] (\n) circle (.1);
  \draw[reg,regf] (\n) circle (\hstep/2);
}
\draw[reg,regf] (a) circle (\hstep);
\draw[reg,regf] (b) circle (\hstep);
\foreach \x\n\c in {\hstep/c/fg}{
  \coordinate (\n) at (\x,0);
  \fill[\c] (\n) circle (.1);
  \draw[greg,gregf] (\n) circle (\hstep/2);
}
\draw[brr]  ($(a)+(270:\labdist)$) to node[below] {$\delta$} ($(b)+(270:\labdist)$);
\draw[brl]  ($(a)+(90:\labdist)$) to node[above] {$\delta/2$} ($(c)+(90:\labdist)$);
\draw[brr]  ($(b)+(90:\labdist)$) to node[above] {$\delta/4$} ++(180:\hstep/2);
% right
\begin{scope}[shift={(8,0)}]
\foreach \r\n in {90/a, 210/b, 330/c}{
  \coordinate (\n) at ($(0,0)+(\r:2*.5774*\hstep)$);
  \fill[black] (\n) circle (.1);
  \draw[reg,regf] (\n) circle (\hstep);
}
\foreach \r\n in {150/d, 270/e, 30/f}{
  \coordinate (\n) at ($(0,0)+(\r:.5774*\hstep)$);
}
\coordinate (g) at (0,0);
\foreach \n in {a,b,c}{
  \draw[reg,regf] (\n) circle (\hstep/2);
}
\foreach \n in {d,e,f}{
  \draw[greg,gregf] (\n) circle (\hstep/2);
}
\foreach \n in {d,e,f,g}{
  \fill[fg] (\n) circle (.1);
  \draw[greg,gregf] (\n) circle (.2887*\hstep);
}
\draw[brr]  ($(a)+(150:\labdist)$) to node[auto,swap] {$\delta$} ($(b)+(150:\labdist)$);
\draw[brl]  ($(a)+(30:\labdist)$) to node[auto] {$\delta/2$} ($(f)+(30:\labdist)$);
\draw[brr]  ($(c)+(30:\labdist)$) to node[auto,swap] {$\delta/4$} ++(120:\hstep/2);
\draw[brl]  (.2887*\hstep,-1.9) to node[below] {$\delta/2\sqrt{3}$} (-.2887*\hstep,-1.9);
\end{scope}
\end{tikzpicture}
	\caption{Configurations for the barycentric subdivisions of 1-simplices (midpoints, left) and 2-simplices (centroids, right). Elements of $X$ are drawn black, and elements of $B(X,\delta)\setminus X$ are drawn green.}
	\label{fig_baryproof}
\end{figure} 

Next, consider an interval in $D_X$ and its image in $\varphi(D_X)$. The interval in the image cannot be longer than it was before, as the set $B(X,\delta)$ fully contains $X$. If the interval is shorter, it can be shorter by at most $\delta/4$, due to a new point at the midpoint between it and another point. This follows by observing that for $x_a,x_b\in X$ with $d(x_a,x_b)<\delta$ and $x_c = (x_a+x_b)/2$, the ball centered at $x_c$ with radius $r$ is completely covered by the balls centered at $x_a$ and $x_b$ of radius $r+\delta/2$, as described in \cref{fig_baryproof}.

The map $\varphi$ may be considered as a bijection, after enriching both the source and the target with countably many points $(r,r)$ for every $r\in\R_{\geqslant 0}$ (a standard method in PH), and sending all elements not in the image of $\varphi$ to $(0,0)$. Hence for the map $\varphi\colon D_X\to D_{B(X,\delta)}$ the supremum over all the differences between an interval in $D_X$ and its image under $\varphi$ is at most $\delta/4$. It follows that $\distbottleneck(D_X,D_{B(X,\delta)}) \leqslant \delta/4$.
\end{proof}

The second result is about the stability of the sparsification method $S(X,\varepsilon)$.

\begin{theorem}\label{thm_mainsparse}
    Let $X\subseteq \R^N$ be a finite set, and $S(X,\epsilon) \subseteq X$ the sparsification of $X$ at a minimum distance $\varepsilon>0$.
    Let $D_X,D_{S(X,\varepsilon)}$ be the persistence diagrams of the Vietoris--Rips filtrations in degree 0 on $X$ and $S(X,\epsilon)$, respectively.
    Then
    \begin{equation}
    \distbottleneck(D_X,D_{S(X,\varepsilon)}) \leqslant \frac\varepsilon2.
    \end{equation}
\end{theorem}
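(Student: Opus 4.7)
The plan is to mirror the proof of \cref{thm_mainbary} with the inclusion reversed: here $S(X,\varepsilon)\subseteq X$. Using the elder-rule bijections $\erule_X$ and $\erule_{S(X,\varepsilon)}$ of \cref{def_points2bars}, I define a candidate map $\varphi\colon D_X\to D_{S(X,\varepsilon)}$ by sending $\erule_X(x)\mapsto\erule_{S(X,\varepsilon)}(x)$ whenever $x\in S(X,\varepsilon)$, and $\erule_X(x)\mapsto(0,0)$ whenever $x\in X\setminus S(X,\varepsilon)$. After enriching both diagrams with countably many diagonal points, this becomes a bijection, and it suffices to bound $d(c,\varphi(c))$ uniformly in $c$.

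For $x\in X\setminus S(X,\varepsilon)$, \cref{def_sparsification} supplies a landmark $x^{*}\in S(X,\varepsilon)$ ordered before $x$ with $d(x,x^{*})\leqslant\varepsilon$. Hence the edge $\{x,x^{*}\}$ lies in the Vietoris--Rips filtration by the time its parameter reaches $\varepsilon$, so $x$'s component has merged with an older one no later than this moment. In the same diagram convention used in the proof of \cref{thm_mainbary}, $\erule_X(x)$ is then a subinterval of $[0,\varepsilon/2)$, and the Euclidean distance from its representative point in $\R^2$ to $(0,0)$ is at most $\varepsilon/2$.

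For $x\in S(X,\varepsilon)$, the two intervals $\erule_X(x)=[0,r)$ and $\erule_{S(X,\varepsilon)}(x)=[0,r')$ both lie on the vertical axis, so the contribution of $\varphi$ at $x$ equals $|r-r'|$. Since $S(X,\varepsilon)\subseteq X$ has strictly fewer edges available, $r\leqslant r'$, and to bound $r'-r$ I will run a chain-replacement argument. Given a chain $x=z_0,z_1,\ldots,z_k$ in the Vietoris--Rips filtration on $X$ witnessing the killing event described in \cref{def_killer}, I will replace each removed intermediate vertex $z_i$ by its earlier-indexed landmark $z_i^{*}\in S(X,\varepsilon)$ with $d(z_i,z_i^{*})\leqslant\varepsilon$, and apply the triangle inequality together with the separation property $d(p,p')>\varepsilon$ for distinct landmarks to bound how much the filtration parameter must grow before the replaced chain lives in the Vietoris--Rips filtration on $S(X,\varepsilon)$.

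The principal obstacle is obtaining the tight constant $\varepsilon/2$ in the kept-point case: a direct triangle-inequality estimate in the chain-replacement step allows each removed endpoint of a single chain edge to contribute a full $\varepsilon$, which without further care yields only the weaker bound $\varepsilon$. Closing this factor of two requires exploiting the killing edge $(x',y')$ of \cref{def_killer} more carefully, for instance by showing that at most one of $x',y'$ need be removed when the chain is chosen optimally, or by using the landmark-separation property to rule out that two consecutive removed vertices of a short chain are assigned to distinct landmarks. A configuration that saturates the removed-point case of the bound is the collinear triple $X=\{x_0,z,x_1\}$ with $x_0<z<x_1$ and $d(x_0,z)=d(z,x_1)=\varepsilon$, $d(x_0,x_1)=2\varepsilon$: then $z$ is removed, $\erule_X(z)=[0,\varepsilon/2)$ maps to $(0,0)$ at Euclidean distance exactly $\varepsilon/2$, and $\erule_X(x_1)=[0,\varepsilon/2)$ shifts to $\erule_{S(X,\varepsilon)}(x_1)=[0,\varepsilon)$ by exactly $\varepsilon/2$, confirming that the bound is sharp.
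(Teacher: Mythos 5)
Your plan coincides with the paper's proof in its overall shape: the same elder-rule matching (the paper builds $\varphi=\erule_X\circ\iota\circ\erule_{S(X,\varepsilon)}^{-1}$, the reverse direction of yours, but this is immaterial after enriching with diagonal points), and the same treatment of removed points, whose bars are subintervals of $[0,\varepsilon/2)$ by \cref{def_sparsification}. The gap is exactly where you flag it: you never establish the $\varepsilon/2$ bound in the kept-point case, only the weaker $\varepsilon$ that falls out of replacing every removed vertex of a connecting chain by its landmark. The paper closes this step not by a chain argument but by the single configuration of \cref{fig_sparseproof}: one removed point within $\varepsilon$ of the killing class, from which it asserts the death of a kept bar can move earlier by at most $\varepsilon/2$. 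So as submitted your proposal is incomplete at the decisive step.

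Moreover, neither of your suggested repairs can succeed, because two consecutive removed vertices of a short chain \emph{can} be assigned to distinct, far-apart landmarks. Take collinear points $x_0=0$, $x_1=2\varepsilon+s$, $a=\varepsilon$, $b=\varepsilon+s$ with order $(x_0,x_1,a,b)$ and $0<s\leqslant\varepsilon$. Sparsification keeps only $\{x_0,x_1\}$ ($a$ is removed by $x_0$, $b$ by $x_1$), yet in $X$ the chain $x_0,a,b,x_1$ connects everything at parameter $\varepsilon$, while in $S(X,\varepsilon)$ the merge occurs only at $2\varepsilon+s$. Under the paper's normalization $\erule_X(y)=[0,d(x',y')/2)$, the kept bar $[0,\varepsilon+s/2)\in D_{S(X,\varepsilon)}$ has no partner in $D_X\cup\{\text{diagonal}\}$ at distance $\leqslant\varepsilon/2$: matching it to either copy of $[0,\varepsilon/2)$ costs $(\varepsilon+s)/2$, to $[0,s/2)$ costs $\varepsilon$, and to the diagonal costs at least $(2\varepsilon+s)/(2\sqrt2)$. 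So the per-bar $\varepsilon/2$ estimate you are trying to prove in the kept case is false, and the honest output of your chain-replacement route is the bound $\varepsilon$ (deaths move by at most $\varepsilon$ in this convention), not $\varepsilon/2$. Note that this configuration is also outside the scope of the paper's \cref{fig_sparseproof} argument, which implicitly assumes the early merge is mediated by a single removed point adjacent to the killing class; you should raise this with the authors, since it bears on the constant in \cref{thm_mainsparse} itself rather than merely on your proof strategy.
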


\begin{proof}
Analogously to the proof of \cref{thm_mainbary}, there is a natural inclusion $\varphi\colon D_{S(X,\varepsilon)} \hookrightarrow D_X$, induced by the inclusion $\iota: S(X,\varepsilon) \hookrightarrow X$. More precisely, it may be constructed as the composition $\varphi = \erule_X \circ \iota \circ \erule_{S(X,\varepsilon)}^{-1}$, presented visually as the vertical dashed line in the commutative diagram
\begin{equation}
    \label{diag_sparsediag}
    \newcommand\hstretch{4}
\newcommand\vstretch{1.5}
\begin{tikzpicture}[baseline=-2pt]
% NODES
\node (x) at (0,\vstretch) {$S(X,\varepsilon)$};
\node (bx) at (0,0) {$X$};
\node (dx) at (\hstretch,\vstretch) {$D_{S(X,\varepsilon)}$};
\node (dbx) at (\hstretch,0) {$D_X$};
% ARROWS
\draw[->] (x) to node[above] {$\erule_{S(X,\varepsilon)}$} node[below] {$\simeq$} (dx);
\draw[->] (bx) to node[above] {$\erule_X$} node[below] {$\simeq$} (dbx);
\draw[{Hooks[right]}->] (x) to node[left] {$\iota$} (bx);
\draw[{Hooks[right]}->,dashed] (dx) to node[right] {$\varphi$} (dbx);
\end{tikzpicture}.
\end{equation}
Consider an interval in $D_X \setminus D_{S(X,\varepsilon)}$.
The interval must be a subinterval of $[0,\varepsilon/2)$, as $x_i\in X \setminus S(X,\varepsilon)$ implies that there is $x_j\in S(X,\varepsilon)$ with $i>j$ and $d(x_i,x_j)< \varepsilon$.
That is, by the method of sparsification used (\cref{def_sparsification}), the 0-class born at $x_i$ must die no later than $\varepsilon/2$.

\begin{figure}[htbp]
    %\internallinenumbers
    \centering
	\newcommand\bigsp{7} % big spacing between i and j
\newcommand\minsp{0} % small spacing to not overlap label ends
\begin{tikzpicture}[
  reg/.style={draw=black!50},
  regf/.style={fill=gray,fill opacity=.1},
  greg/.style={draw=fg},
  gregf/.style={fill=fg,fill opacity=.1},
  brr/.style={dashed,{Rectangle[right,length=.4pt,width=8pt]}-{Rectangle[left,length=.4pt,width=8pt]}},
  brl/.style={dashed,{Rectangle[left,length=.4pt,width=8pt]}-{Rectangle[right,length=.4pt,width=8pt]}}
]
% frame
\path[clip] (-1.5,-1.5) rectangle (\bigsp+1.5,1.5);
% coordinates
\foreach \x\n\l\anch in {0/i/i/90, 1/k/{k>i}/135, \bigsp/j/{j>i}/90}{
  \coordinate (\n) at (\x,0);
  \node[anchor=\anch,inner sep=8pt] at (\n) {$x_{\l}$};
}
% disks
\draw[greg,gregf] (k) circle (\bigsp/2-.5);
\foreach \n\r in {i/1, i/{\bigsp/2}, j/{\bigsp/2}, j/{\bigsp/2-.5}}{
  \draw[reg,regf] (\n) circle (\r);
}
% nodes
\foreach \x\n\l\anch in {0/i/i/90, \bigsp/j/{j>i}/90}{
  \fill[black] (\x,0) circle (.1);
}
\fill[fg] (k) circle (.1);
% labels
\draw[brr] ($(i)+(270:.9)$) to node[below] {$r$} ++ (0:\bigsp/2-\minsp);
\draw[brl] ($(j)+(270:.9)$) to node[below] {$r$} ++ (180:\bigsp/2-\minsp);
\draw[brl] ($(i)+(90:.7)$) to node[above] {$\varepsilon$} ++ (0:1-\minsp);
\draw[brl] ($(k)+(90:.7)$) to node[above] {$r-\varepsilon/2$} ++ (0:\bigsp/2-.5-\minsp);
\draw[brr] ($(j)+(90:.7)$) to node[above] {$r-\varepsilon/2$} ++ (180:\bigsp/2-.5-\minsp);
\end{tikzpicture}
	\caption{Configuration for the effects of sparsification on $X$. Elements of $S(X,\epsilon)$ are drawn black, and elements of $X\setminus S(X,\epsilon)$ are drawn green.}
	\label{fig_sparseproof}
\end{figure}
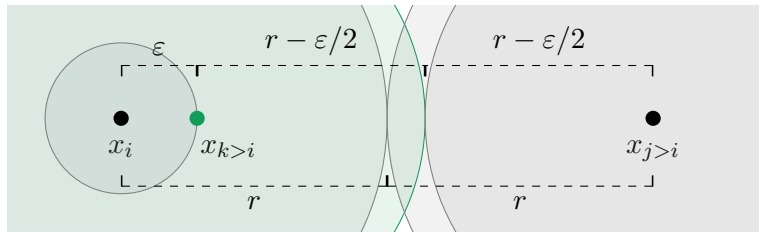 

Next, consider an interval in $D_{S(X,\varepsilon)}$ and its image in $\varphi(D_{S(X,\varepsilon)})$. 
The interval in the image cannot be longer than it was before, as the set $X$ fully contains $S(X,\varepsilon)$.
If the interval is shorter, it may be shorter by at most $\varepsilon/2$, as the class that kills it in $D_{S(X,\varepsilon)}$ may reach the interval at most $\varepsilon/2$ earlier, from a point within $\varepsilon$ of that class.
This is demonstrated in \cref{fig_sparseproof}.
As in the proof of \cref{thm_mainbary}, the map $\varphi$ may be considered a bijection, and so $\distbottleneck(D_X,D_{S(X,\varepsilon)}) \leqslant \varepsilon/2$.
\end{proof}

The third result is about the stability of the grid method $G(X,\mu)$.

\begin{theorem}\label{thm_maingrid}
    Let $X\subseteq \R^N$ be a finite set and $G(X,\mu)$ be a $\mu$-grid of $X$.
    Let $D_X$ be the persistence diagrams of the Vietoris--Rips filtration in degree 0, and $D_{G(X,\mu)}$ the persistence diagram of the cubical filtration in degree 0.
    Then
    \begin{equation}
        \distbottleneck\left( D_X,D_{G(X,\mu)} \right) \leqslant \frac{\sqrt N\mu}{2}.
    \end{equation}
\end{theorem}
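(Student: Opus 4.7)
The plan follows the bijection-based strategy of \cref{thm_mainbary} and \cref{thm_mainsparse}. The underlying map is the quotient $q\colon X\to G(X,\mu)$ of \cref{rem_gridmap}, which satisfies $x-q(x)\in[0,\mu)^N$ and hence $d(x,q(x))<\sqrt N\,\mu$ for every $x\in X$. I first induce an ordering on $G(X,\mu)$ from the one on $X$: each grid point $g$ inherits the smallest order of any $x\in X$ with $q(x)=g$, and I write $\tilde g\in X$ for the corresponding representative. The map $g\mapsto \tilde g$ is an order-preserving injection $G(X,\mu)\hookrightarrow X$, which together with the elder rule bijections yields a partial map
\[
\varphi\colon D_{G(X,\mu)}\hookrightarrow D_X,\qquad \erule_{G(X,\mu)}(g)\mapsto \erule_X(\tilde g),
\]
extended to a bijection of diagonal-augmented diagrams by sending each unmatched $\erule_X(x_i)\in D_X$ to the diagonal, in a commutative square analogous to \cref{diag_barydiag} and \cref{diag_sparsediag}.

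Next I bound the contribution of the unmatched intervals of $D_X$, namely those indexed by $x_i\in X$ with $x_i\neq\widetilde{q(x_i)}$. For such $x_i$, the earlier-ordered representative $\widetilde{q(x_i)}$ lies in the same $\mu$-box $q(x_i)+[0,\mu)^N$, so $d(x_i,\widetilde{q(x_i)})<\sqrt N\,\mu$. By \cref{def_killer} the interval $\erule_X(x_i)$ must therefore die by time at most $\sqrt N\,\mu/2$. The Euclidean distance from a point $(0,r)\in\R^2$ to the diagonal is $r/\sqrt 2$, so for $r\leqslant\sqrt N\,\mu/2$ this distance is strictly less than $\sqrt N\,\mu/2$, well within the required bound.

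The hardest step will be bounding the death-time difference for matched intervals: for each $g\in G(X,\mu)$ I need to compare the VR death of $\erule_X(\tilde g)$ with the cubical death of $\erule_{G(X,\mu)}(g)$. The former occurs when $\tilde g$'s VR component merges with an older one through an edge $(x',y')$ of Euclidean length $2d_X$, while the latter occurs when the component of $g$ in the cubical filtration merges with an older component via a sequence of orthogonal $\mu$-adjacencies in $\mu\Z^N$. Because $q$ displaces every point by less than $\sqrt N\,\mu$ and is order-preserving on representatives, the two merging events can be transported across $q$ by the triangle inequality with an error of at most $\sqrt N\,\mu$, and the $d/2$ convention of \cref{def_killer} then yields the claimed $\sqrt N\,\mu/2$ bound. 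The main obstacle is reconciling arbitrary pairwise VR edges on $X$ with the strictly orthogonal cubical adjacencies on $\mu\Z^N$, which will require a careful case analysis matching each VR merger with a corresponding path of cubical mergers on the grid, together with the observation that the ordering on $G(X,\mu)$ is derived from that of $X$ so that ``oldest representative in a component'' is preserved on both sides.
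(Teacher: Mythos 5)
Your overall strategy coincides with the paper's: it also works through the non-injective map $g\colon X\to G(X,\mu)$ of \cref{rem_gridmap}, composes it with the elder-rule bijections to form $\tilde\varphi=\erule_{G(X,\mu)}\circ g\circ \erule_X^{-1}$, keeps for each grid point only one interval among its preimages, and sends the remaining intervals of $D_X$ to the diagonal (the paper phrases this as a surjection $D_X\to D_{G(X,\mu)}$ rather than your injection in the opposite direction; this difference is immaterial). Your treatment of the unmatched intervals is exactly the paper's first case, argued the same way: the discarded point and its earlier-ordered representative lie in the same $\mu$-box, hence are within $\sqrt N\mu$ of each other, so by the elder rule and \cref{def_killer} the corresponding interval is contained in $[0,\sqrt N\mu/2)$ and its distance to the diagonal is within the bound.

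The matched case, however, is a genuine gap, and you say so yourself: ``will require a careful case analysis'' is a plan, not a proof. The triangle-inequality transport you sketch yields at most that the images of the Vietoris--Rips killers satisfy $d(g(x'),g(y'))\leqslant d(x',y')+\sqrt N\mu$ (writing $x'=g(x')+\alpha$, $y'=g(y')+\beta$ with $\alpha,\beta\in[0,\mu)^N$); it does not by itself control the filtration value at which the two grid components actually merge in the cubical complex, which is precisely the obstacle you identify (components of $\mathcal C_\mu(G(X,\mu))$ merge only along orthogonal $\mu$-adjacencies, so proximity of $g(x')$ and $g(y')$ does not immediately produce a joining $1$-cube), nor does it address the reverse direction, namely that the cubical death cannot occur much earlier than the Vietoris--Rips one; both directions are needed for a bottleneck bound. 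The paper closes this case by comparing killing pairs directly: if $c\in D_X$ is killed by $(x',y')$ and $\varphi(c)$ is killed by $(x'',y'')\in G(X,\mu)^2$, it combines $d(g(x'),g(y'))\leqslant d(x',y')+\sqrt N\mu$ with the asserted inequality $d(x'',y'')\leqslant d(g(x'),g(y'))$ to conclude $|d(x'',y'')-d(x',y')|\leqslant\sqrt N\mu$, hence a death-time difference of at most $\sqrt N\mu/2$. So the step you flag as the main obstacle is exactly the step your write-up must still supply: a proof that the grid killing pair is controlled by the image of the Vietoris--Rips killing pair (and conversely). Until that is written down, your proposal establishes only the unmatched-interval half of the bound.
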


\begin{proof}
Note that there is no inclusion from $X$ into $G(X,\mu)$, as the map $g\colon X\to G(X,\mu)$ described in \cref{rem_gridmap} is not necessarily injective.
However, we can still construct a map $\varphi\colon D_X\to D_{G(X,\mu)}$ using $g$, by defining
\begin{equation}
\varphi(c) = \begin{cases}
\tilde{\varphi}(c) & \text{\ if\ $c$ is the longest interval in $\tilde{\varphi}^{-1}(c)$, } \\
0 & \text{\ else,}
\end{cases}
\end{equation}
where $\tilde{\varphi} = \erule_{G(X,\mu)}  \circ g \circ \erule_X^{-1}$ is an analogous map to the induced $\varphi$ in \cref{thm_mainbary,thm_mainsparse}.
With this, we will compute the distance $\distbottleneck(D_X,D_{G(X,\mu)})$ to the claimed bound. 
Note that $\varphi$ is surjective (when restricted to the support in the source and target), so it will suffice to consider the image of every element in $D_X$ under $\varphi$ to compute this distance.

First consider an interval $c\in D_X$ for which $\varphi(c) = 0$.
For $x=\erule_X^{-1}(c)$, this means that there is some $y\in X$ with $g(x)=g(y)$, and $y$ ordered before $x$.
As $d(x,y)<\sqrt N\mu$, since $\sqrt N$ is the longest distance between any two points in the $N$-cube $[0,1]^N$, it follows that $c\subseteq [0,\sqrt N\mu/2)$.
Hence in this case, it means the difference between $c$ and $\varphi(c)$ is bounded above by $\sqrt N\mu/2$.

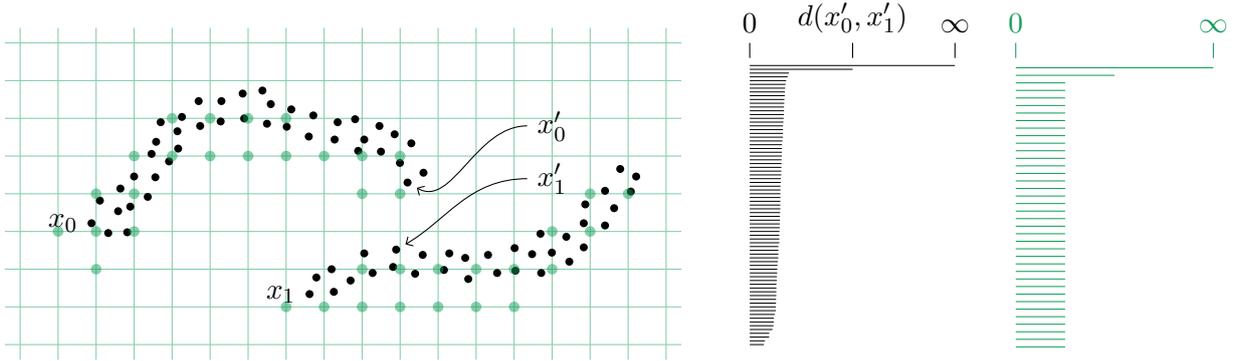
\begin{figure}[htbp]
    %\internallinenumbers
    \centering
	\newcommand\unit{.5} %width of one unit in grid
\newcommand\overlap{.25} % end overlap for grid
\begin{tikzpicture}[
  reg/.style={line width=1pt,draw=black!50},
  lab/.style={inner sep=7pt},
  alab/.style={fill=white,fill opacity=.8,text opacity=1},
  brr/.style={line width=1pt,dashed,{Rectangle[right,length=1pt,width=10pt]}-{Rectangle[left,length=1pt,width=10pt]}},
  brl/.style={line width=1pt,dashed,{Rectangle[left,length=1pt,width=10pt]}-{Rectangle[right,length=1pt,width=10pt]}}
]
% grid
\foreach \x in {0,...,17}{\draw[ggreen!50] (\x*\unit-.1,-.3)--(\x*\unit-.1,4.1);}
\foreach \y in {0,...,8}{\draw[ggreen!50] (-.3,\y*\unit-.1)--(8.6,\y*\unit-.1);}
% dataset
\foreach \x\y\n in {
106/138/6, 119/167/7, 84/151/8, 131/139/9, 135/173/10, 158/186/11, 186/233/12, 95/181/15, 140/213/16, 169/259/17, 203/292/18, 227/280/19, 772/171/20, 197/273/21, 255/313/24, 285/290/25, 254/286/26, 225/313/27, 175/285/28, 283/323/29, 760/148/30, 408/285/32, 404/262/33, 341/279/35, 376/294/36, 347/302/37, 320/309/38, 465/246/40, 431/289/42, 435/247/43, 370/266/45, 443/262/46, 483/269/47, 463/280/49, 500/205/50, 490/231/51, 521/218/52, 505/257/54,  380/79/56, 400/90/57, 371/57/58, 425/75/61, 443/111/62, 485/116/63, 520/109/64, 454/85/65, 548/89/67, 576/105/68, 555/111/69, 641/118/72, 642/88/73, 676/85/74, 713/100/75, 690/112/77, 664/110/78, 732/119/79, 709/133/83, 675/137/84, 732/151/85, 309/327/86, 122/197/87, 760/194/89, 801/213/90, 794/193/95, 734/176/99, 780/223/100,  315/283/109, 168/212/110, 198/250/111, 163/243/112, 403/60/115, 481/93/117, 580/77/118, 618/85/119, 510/84/121, 606/109/122}{
  \coordinate (\n) at (.01*\x,.01*\y);
  \fill[black] (\n) circle[radius=1.5pt];
  % \node[red,scale=.5] at (\n) {\n};
}
\node[anchor=east,inner sep=5pt] at (8) {$x_0$};
\node[anchor=east,inner sep=5pt] at (58) {$x_1$};
\node (x0p) at (6.9,2.8) {$x_0'$};
\node (x1p) at (6.9,2.1) {$x_1'$};
\draw[-{Straight Barb[width=4pt,length=2pt]},shorten >=4pt] (x0p.west) to [out=180,in=330] (50);
\draw[-{Straight Barb[width=4pt,length=2pt]},shorten >=4pt] (x1p.west) to [out=180,in=30] (63);
% grid points
\foreach \x\y in {1/3, 2/2, 2/3, 2/4, 3/3, 3/4, 3/5, 4/5, 4/6, 5/5, 5/6, 6/5, 6/6, 7/5, 7/6, 8/5, 9/4, 9/5, 10/4, 10/5, 7/1, 8/1, 9/1, 9/2, 10/1, 10/2, 11/1, 11/2, 12/1, 12/2, 13/1, 13/2, 14/2, 14/3, 15/3, 15/4, 16/4}{
  \fill[ggreen,opacity=.5] (\x*\unit-.1,\y*\unit-.1) circle[radius=2pt];
}
% barcode original
\begin{scope}[shift={(9.5,-.1)}]
\setcounter{mycounter}{0}
\foreach \x in {12.3693, 13.3417, 17, 17.088, 19.9249, 20.8087, 21.095, 21.1896, 21.8403, 22.8254, 22.8254, 23.0217, 23.0868, 23.2594, 23.3452, 23.3452, 23.3452, 23.7697, 23.7697, 24.0832, 24.1868, 24.3516, 24.6982, 25.0599, 25.0599, 25.0799, 25.0799, 25.9422, 25.9422, 25.9422, 26.0768, 26.3059, 26.3059, 26.4197, 26.4764, 26.6271, 26.8328, 26.8328, 26.8701, 26.9072, 26.9258, 26.9258, 27.0185, 27.2029, 27.2947, 27.6586, 27.6586, 27.7308, 27.8568, 27.8568, 28.1603, 28.1603, 28.2312, 28.2843, 28.6356, 29.1548, 29.1548, 29.1548, 29.2062, 29.5466, 29.7321, 30, 30.0167, 30.0832, 30.2655, 30.3645, 30.6757, 30.8058, 31.1127, 31.3847, 31.9531, 33.2415, 34.4093, 90.2552, 180}{
  \draw[line width=.2pt] (0,.05*\themycounter)--(.015*\x,.05*\themycounter);
  \stepcounter{mycounter}
}
\foreach \x\l in {0/0, {90.2552*.015}/{d(x_0',x_1')}, {180*.015}/{\infty}}{
  \draw (\x,76*.05) --++ (90:.2) node[above] {$\l$};
}
\end{scope}
% barcode gridified
\begin{scope}[shift={(13,-.1)}]
\setcounter{mycounter}{0}
\foreach \x in {1, 1, 1, 1, 1, 1, 1, 1, 1, 1, 1, 1, 1, 1, 1, 1, 1, 1, 1, 1, 1, 1, 1, 1, 1, 1, 1, 1, 1, 1, 1, 1, 1, 1, 1, 1, 2, 4}{
  \draw[ggreen,line width=.2pt] (0,.1*\themycounter-.03)--(.65*\x,.1*\themycounter-.03);
  \stepcounter{mycounter}
}
\foreach \x\l in {0/0, {4*.65}/{\infty}}{
  \draw[ggreen] (\x,76*.05) --++ (90:.2) node[ggreen,above] {$\l$};
}
\end{scope}
\end{tikzpicture}
	\caption{An example of a finite set $X\subseteq \R^2$ (left, black) and its image $G(X,\mu)$ in a grid (left, green).
    Their persistence diagrams in dimension 0 are also compared (right).}
	\label{fig_gridproof}
\end{figure} 

Next consider an interval $c\in D_X$ for which $\varphi(c)=\tilde\varphi(c)$.
If $c= [0,\infty)$, it must also be that $\varphi(c)=[0,\infty)$, in which case there is nothing else to prove, so assume that $c\neq [0,\infty)$.
For $x=\erule_X^{-1}(c)$, consider $y\in X$ such that $c$ is killed by $\erule_X(y)$, and killed by $(y',x')\in X^2$.
Similarly, suppose that $\varphi(c)$ is killed by $(y'',x'')\in G(X,\mu)^2$.
Let $\alpha,\beta\in [0,\mu)^N$ such that $x'=g(x')+\alpha$ and $y'=g(y')+\beta$, for which
\begin{align*}
d(g(x'),g(y')) & = \Arrowvert g(x') - g(y')\Arrowvert \\
& = \Arrowvert (x'-\alpha) - (y'-\beta)\Arrowvert \\
& = \Arrowvert (x'-y') + (\beta - \alpha) \Arrowvert \\
& \leqslant \Arrowvert x'-y' \Arrowvert  + \Arrowvert \beta - \alpha \Arrowvert \\
& = d(x',y') + d(\alpha,\beta) \\
& \leqslant d(x',y') + \sqrt N\mu.
\end{align*}
With this, and knowing $d(x'',y'') \leqslant d(g(x'),g(y'))$ by definition, we get 
\begin{equation}
d(x'',y'') \leqslant d(x',y')+\sqrt N \mu
\ \ \implies\ \ 
|d(x'',y'') - d(x',y')| \leqslant \sqrt N\mu.
\end{equation}
Hence also in this case, the difference between $c$ and $\varphi(c)$ is bounded above by $\sqrt N\mu/2$.  
\end{proof}

\begin{remark}
The bounds described in \cref{thm_mainbary,thm_mainsparse} are strict, and may be achieved by the configurations of small point clouds presented in \cref{fig_baryproof,fig_sparseproof}, respectively.
The bound in \cref{thm_maingrid} is also strict, but as $\sqrt N$ is the diagonal distance in $[0,1]^N$, it is not achieved by any configuration.
Nonetheless, for any $\nu>0$, a bottleneck distance of $\frac{\sqrt N\mu - \nu}{2}$  is achieved by $X=\{\mathbf{0},(1-\frac{\nu}{\sqrt{N}})\mathbf{1}\}$ and $\mu=1$, where $\mathbf{0} \colonequals (0,\dots,0),\mathbf{1} \colonequals (1,\dots,1)\in \R^N$. 
\end{remark}

\subsection{Computing higher dimensions with duality}
\label{sec_cubicalduality}
Here we consider the relationship between the homology groups of a cubical complex on a grid, and a cubical complex on the complement of the grid.
The underlying point cloud is still finite, and homology is computed with coefficients in a field, so we may identify homology and cohomology groups.

\begin{theorem}
    \label{thm_complement}
    Let $X\subseteq \R^N$ be a finite set and $G(X,\mu)$ be a $\mu$-grid of $X$. Then
    \begin{equation}\label{eqn_complement}
    H_{N-1}(\mathcal C_{\mu/2}(G_t(X,\mu))) \oplus \Z_2 \simeq H_0(\mathcal C_\mu(\mu\Z^N \setminus G(X,\mu))).
    \end{equation}
\end{theorem}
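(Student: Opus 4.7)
The plan is to interpret both sides of~(\ref{eqn_complement}) as counts of connected components of the topological complement of $|\mathcal{C}_{\mu/2}(G_t(X,\mu))|$ in $\R^N$, and then reconcile the two counts via a cubical version of Alexander duality.

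First I would identify the geometric realization
\[
|\mathcal{C}_{\mu/2}(G_t(X,\mu))| \;=\; \bigcup_{x \in G(X,\mu)} \bigl(x + \bigl[-\tfrac{\mu}{2}, \tfrac{\mu}{2}\bigr]^N\bigr).
\]
By~(\ref{eqn:gridthickening}), for each $x\in G(X,\mu)$ the set of $\tfrac{\mu}{2}\Z^N$-lattice points within $d_\infty$-distance $\tfrac{\mu}{2}$ of $x$ is a $3^N$-block whose cubical hull is exactly the closed $\mu$-cube centered at $x$, and the cubical complex on the union of these blocks assembles into the stated union.

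Second, I would establish a bijection between the connected components of $\mathcal{C}_\mu(\mu\Z^N \setminus G(X,\mu))$ and the components of the open set $U \colonequals \R^N \setminus |\mathcal{C}_{\mu/2}(G_t(X,\mu))|$. Using the tiling of $\R^N$ by closed $\mu$-cubes $C_p \colonequals p + [-\tfrac{\mu}{2}, \tfrac{\mu}{2}]^N$ for $p \in \mu\Z^N$, the thickening is exactly $\bigcup_{p \in G} C_p$, so $U$ is assembled from the interiors $\interior C_p$ for $p \notin G$ together with the shared faces between adjacent $p, p' \notin G$ (one checks directly that such a shared face lies in $|\mathcal{C}_{\mu/2}(G_t)|$ only when one of the two centers belongs to $G$). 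Then $U$ deformation retracts onto the 1-skeleton of $\mathcal{C}_\mu(\mu\Z^N \setminus G(X,\mu))$ by radially collapsing each $C_p$ to its center and each shared face to the segment connecting the two centers, which yields the desired bijection on $\pi_0$.

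Third, since $G(X,\mu)$ is finite the thickening is compact, so exactly one component of $U$ is unbounded. Alexander duality with $\Z_2$-coefficients, applied to the compact cubical subspace $|\mathcal{C}_{\mu/2}(G_t(X,\mu))|$ of $\R^N$, gives $\tilde H_{N-1}(|\mathcal{C}_{\mu/2}(G_t)|;\Z_2) \cong \tilde H^0(U;\Z_2)$, whose rank is $(\#\text{components of } U) - 1$. Writing $n$ for this common component count, I conclude $H_{N-1}(\mathcal{C}_{\mu/2}(G_t(X,\mu))) \cong \Z_2^{n-1}$ and $H_0(\mathcal{C}_\mu(\mu\Z^N \setminus G(X,\mu))) \cong \Z_2^n$, from which~(\ref{eqn_complement}) follows after adjoining a single $\Z_2$-summand to the former.

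The main obstacle will be the deformation retract in the second step, specifically in configurations where $\mu$-cubes of the thickening meet only along lower-dimensional faces (e.g.\ four $G$-points in diamond position whose cubes touch only at corners, enclosing a bounded central pocket that contains a single complementary lattice point). One must verify that each such pocket corresponds to its own combinatorial component on the $\mathcal{C}_\mu(\mu\Z^N \setminus G)$ side, so that the combinatorial and topological component counts truly agree.
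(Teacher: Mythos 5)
Your overall route is the same as the paper's: identify $|\mathcal C_{\mu/2}(G_t(X,\mu))|$ with $\bigcup_{p\in G(X,\mu)}\bigl(p+[-\tfrac\mu2,\tfrac\mu2]^N\bigr)$, match connected components of its complement $U$ with those of $\mathcal C''\colonequals\mathcal C_\mu(\mu\Z^N\setminus G(X,\mu))$, and finish with Alexander duality, the extra $\Z_2$ accounting for the unbounded component (the paper compactifies to $S^N$ and works with reduced groups; your bookkeeping is equivalent). Your first and third steps are sound.

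The weak point is the mechanism in your second step. The claimed deformation retraction of $U$ onto the 1-skeleton of $\mathcal C''$ cannot exist in general: for $N=2$ and $G(X,\mu)$ a single point, $U$ is homotopy equivalent to a circle, while the 1-skeleton of $\mathcal C''$ is an infinite grid graph with infinitely many independent cycles, so the two are not homotopy equivalent; moreover your description of $U$ (interiors of complement cubes plus shared facets) omits the faces of codimension at least $2$ shared only among complement cubes, which do lie in $U$. You only need a bijection on $\pi_0$, and that bijection is correct, but it requires a different justification. The paper supplies exactly this: $U$ coincides with the open $d_\infty$-$\tfrac\mu2$-neighbourhood of $|\mathcal C''|$ (this neighbourhood and the union of the closed cubes $C_p$, $p\in G(X,\mu)$, partition $\R^N$), and nearest-point projection retracts that neighbourhood onto $|\mathcal C''|$, giving the $\pi_0$ identification. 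In particular the ``diamond'' pockets you flag are handled automatically: diagonal touching of thickened cubes never produces an edge of $\mathcal C''$, and the enclosed pocket retracts to the isolated complementary vertex. If you replace the 1-skeleton retraction by this partition-plus-retraction argument (or, for the easy direction, by straight segments from a point of $U$ to a complementary lattice point, checking that such segments and all edges of $\mathcal C''$ avoid every $C_p$ with $p\in G(X,\mu)$), your proof goes through as intended.
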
 

\begin{proof}
For ease of notation, let $\mathcal C' \colonequals \mathcal C_{\mu/2}(G_t(X,\mu))$ and $\mathcal C'' \colonequals \mathcal C_\mu(\mu\Z^N \setminus G(X,\mu))$, or, where appropriate, their geometric realizations embedded in $\R^N$.
Since $X$ is finite, there is an appropriately large open $N$-disk $D^N\subseteq \R^N$ with $\mathcal C' \subseteq D^N$.
Identifying the boundary of the closure of $D^N$ to a single point allows us to consider $\mathcal C' $ as a subset of $S^N$.
By Alexander duality, we have $\widetilde{H}_0(\R^N\setminus \mathcal C') \simeq \widetilde{H}_0(S^N\setminus \mathcal C') \simeq \widetilde{H}^{N-1}(\mathcal C')$, or equivalently $H^{N-1}(\mathcal C')$. As $\mathcal C'$ is a cubical complex on a finite set, its homology groups are finitely generated, so $H^{N-1}(\mathcal C')\simeq H_{N-1}(\mathcal C')$.

To complete the proof, we must show that $H_0(\R^N\setminus \mathcal C') \simeq H_0(\mathcal C'')$. 
This follows from a decomposition of $\R^N$ into disjoint sets.
Let
\begin{align*}
    X' & \colonequals \left\{ x\in \R^N\ :\ \text{there exists\ } x'\in G(X,\mu) \text{\ with\ } d_\infty(x,x') \leqslant {\textstyle\frac\mu2}\right\}, \\
    X'' & \colonequals  \left\{ x\in \R^N\ :\ \text{there exists\ } x'\in \mathcal C_\mu(\mu\Z^N \setminus G(X,\mu)) \text{\ with\ } d_\infty(x,x') < {\textstyle\frac\mu2}\right\}.
\end{align*}
It is immediate that $X'\cup X'' = \R^N$, as every point is within $\frac\mu2$ in each coordinate of an element of a $\mu$-grid.
Similarly, we see that $X'\cap X'' = \emptyset$, which comes from taking a grid on $\R^N$ and splitting into two disjoint sets.
Any element in the complement of the open $\frac\mu2$-neighbourhood of the cubical complex on $X'$ is within a distance of $\frac\mu2$, in every coordinate, of an element in $X''$.
Comparing with \cref{eqn:gridthickening}, we see that $X'$ is (the geometric realization of) $\mathcal C'$.
Finally, $X''$ is simply the open $\mu/2$-neighbourhood of (the geometric realization of) $\mathcal C''$, and as $\mathcal C''$ consists of $N$-cubes of side length $\mu$, every element in $X''$ has a unique closest element in $\mathcal C''$.
Hence $\mathcal C''$ is a deformation retract of $X''$.
Putting it all together, in homological dimension 0 we have 
\[
H_0(\mathcal C'') \simeq H_0(X'') \simeq H_0(\R^N\setminus X') \simeq H_0(\R^N\setminus \mathcal C'),
\]
as desired.
\end{proof}

\begin{figure}[htbp]
    %\internallinenumbers
    \centering
    \newcommand\pdist{.04} % radius of black points
\newcommand\gdist{1.5pt} % radius of green points
\newcommand\framestep{3.2} % how far apart from each the frames are
\newcommand\unit{.3} % width of one unit in grid
\begin{tikzpicture}[
  baseline=2.6*\unit cm,
  gridline/.style={bg,line width=.8pt},
  shapeline/.style={fg,line width=1pt}
]
% original
\foreach \z in {1,...,5}{
  \draw[gridline] (\z*\unit,.5*\unit)--(\z*\unit,5.5*\unit);
  \draw[gridline] (.5*\unit,\z*\unit)--(5.5*\unit,\z*\unit);
}
\foreach \x\y in {2/3, 3/2, 4/3, 3/4}{
  \fill[fg] (\x*\unit,\y*\unit) circle[radius=\gdist];
}
\node at (3*\unit,-1*\unit) {$G(X,\mu)$};
% thickening
\begin{scope}[shift={(\framestep,0)}]
\fill[white] (2.5*\unit,2.5*\unit) rectangle (3.5*\unit,3.5*\unit);
\foreach \z in {1,...,5}{
  \draw[gridline] (\z*\unit,.6*\unit)--(\z*\unit,5.4*\unit);
  \draw[gridline] (.6*\unit,\z*\unit)--(5.4*\unit,\z*\unit);}
\foreach \z in {1,...,4}{
  \draw[gridline] (\z*\unit+.5*\unit,.6*\unit)--(\z*\unit+.5*\unit,5.4*\unit);
  \draw[gridline] (.6*\unit,\z*\unit+.5*\unit)--(5.4*\unit,\z*\unit+.5*\unit);}
\foreach \x\y in {2/3, 3/2, 4/3, 3/4}{
  \foreach \xshift in {-1,0,1}{
    \foreach \yshift in {-1,0,1}{
      \fill[fg] (\x*\unit+\xshift*\unit*.5,\y*\unit+\yshift*\unit*.5) circle[radius=\gdist];
    }
  }
}
\node at (3*\unit,-1*\unit) {$G_t(X,\mu)$};
\end{scope}
% complex on thickening
\begin{scope}[shift={(2*\framestep,0)}]
\fill[bg] (2.5*\unit,1.5*\unit) --++ (0:\unit) --++ (90:\unit) --++ (0:\unit) --++ (90:\unit) --++ (180:\unit) --++ (90:\unit) --++ (180:\unit) --++ (270:\unit) --++ (180:\unit) --++ (270:\unit) --++ (0:\unit) -- cycle;
\fill[white] (2.5*\unit,2.5*\unit) rectangle (3.5*\unit,3.5*\unit);
\foreach \z in {1,...,5}{
  \draw[gridline] (\z*\unit,.6*\unit)--(\z*\unit,5.4*\unit);
  \draw[gridline] (.6*\unit,\z*\unit)--(5.4*\unit,\z*\unit);}
\foreach \z in {1,...,4}{
  \draw[gridline] (\z*\unit+.5*\unit,.6*\unit)--(\z*\unit+.5*\unit,5.4*\unit);
  \draw[gridline] (.6*\unit,\z*\unit+.5*\unit)--(5.4*\unit,\z*\unit+.5*\unit);}
\foreach \x\y in {2/3, 3/2, 4/3, 3/4}{
  \foreach \xshift in {-1,0,1}{
    \foreach \yshift in {-1,0,1}{
      \fill[fg] (\x*\unit+\xshift*\unit*.5,\y*\unit+\yshift*\unit*.5) circle[radius=\gdist];
    }
  }
}
\foreach \x\y\ang in {2.5/1.5/90, 3.5/1.5/90, 1.5/2.5/0, 1.5/3.5/0}{
  \draw[shapeline] (\x*\unit,\y*\unit) -- ++ (\ang:3*\unit);}
\foreach \x\y\ang in {3/1.5/90, 3/3.5/90, 1.5/2.5/90, 4.5/2.5/90, 2/2.5/90, 4/2.5/90, 1.5/3/0, 3.5/3/0, 2.5/1.5/0, 2.5/4.5/0, 2.5/2/0, 2.5/4/0}{
  \draw[shapeline] (\x*\unit,\y*\unit) -- ++ (\ang:\unit);}
\node at (3*\unit,-1*\unit) {$\mathcal C_{\mu/2}(G_t(X,\mu))$};
\end{scope}
% complement
\begin{scope}[shift={(3*\framestep,0)}]
\foreach \z in {1,...,5}{
  \draw[gridline] (\z*\unit,.4*\unit)--(\z*\unit,5.4*\unit);
  \draw[gridline] (.4*\unit,\z*\unit)--(5.4*\unit,\z*\unit);}
\foreach \x\y in {1/1, 1/2, 1/3, 1/4, 1/5, 2/1, 2/2, 2/4, 2/5, 3/1, 3/3, 3/5, 4/1, 4/2, 4/4, 4/5, 5/1, 5/2, 5/3, 5/4, 5/5}{
  \fill[fg] (\x*\unit,\y*\unit) circle[radius=\gdist];
}
\node at (3*\unit,-1*\unit) {$\mu\Z \setminus G(X,\mu)$};
\end{scope}
% complement complex
\begin{scope}[shift={(4*\framestep,0)}]
\foreach \x\y in {1/1, 4/1, 1/4, 4/4}{
  \fill[bg] (\x*\unit,\y*\unit) rectangle (\x*\unit+\unit,\y*\unit+\unit);}
\foreach \z in {1,...,5}{
  \draw[gridline] (\z*\unit,.4*\unit)--(\z*\unit,5.4*\unit);
  \draw[gridline] (.4*\unit,\z*\unit)--(5.4*\unit,\z*\unit);}
\foreach \x\y in {1/1, 1/2, 1/3, 1/4, 1/5, 2/1, 2/2, 2/4, 2/5, 3/1, 3/3, 3/5, 4/1, 4/2, 4/4, 4/5, 5/1, 5/2, 5/3, 5/4, 5/5}{
  \fill[fg] (\x*\unit,\y*\unit) circle[radius=\gdist];
}
\foreach \x\y\ang in {1/1/90, 5/1/90, 1/1/0, 1/5/0}{
  \draw[shapeline] (\x*\unit,\y*\unit) -- ++ (\ang:4*\unit);}
\foreach \x\y\ang in {2/1/90, 4/1/90, 2/4/90, 4/4/90, 1/2/0, 4/2/0, 1/4/0, 4/4/0}{
  \draw[shapeline] (\x*\unit,\y*\unit) -- ++ (\ang:\unit);}
\node at (3*\unit,-1*\unit) {$\mathcal C_\mu(\mu\Z \setminus G(X,\mu))$};
\end{scope}
\end{tikzpicture}
	\caption{A visual example of the spaces discussed in \cref{thm_complement}. Note that $G(X,\mu)$ and $\mu\Z\setminus G(X,\mu)$ are complementary as grids, while $\mathcal C_{\mu/2}(G_t(X,\mu))$ and $C_\mu(\mu\Z\setminus G(X,\mu))$ are complementary (up to homotopy) as spaces.}
	\label{fig_examples4}
\end{figure}
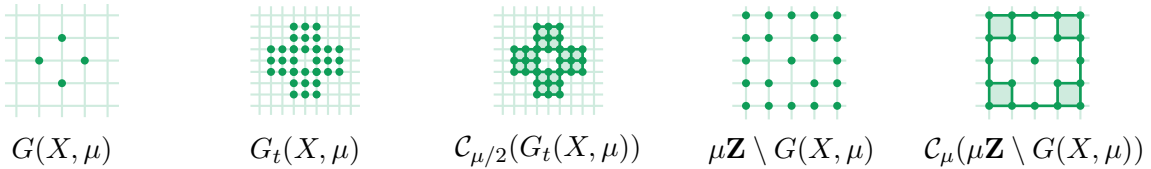

\section{Implementation and testing}
\label{sec_testing}

The data and code presented in this section is on a publicly available GitHub repository \cite{Lazovskis_TopoAware_2024}, which we call ``{\sc\nameshort}: \namelong".
The functions which we provide, in C++, Python, and R, are constructed following the constructions of \cref{sec_structures}, for which a point cloud is both an input and output.
% Implementations of most of the underlying methods, such as construction of simplicial complexes and sparsification, have already successfully been done by other authors \cite{gudhi:SubSampling}, so
Our contribution is to provide an easy-to-use collection of functions, for immediate use in computational pipelines or exploratory notebooks.
The main functions are presented in  \cref{table_functions}.

\begin{table}[hbtp]
    \centering
    \renewcommand\arraystretch{1.4}
    \begin{tabular}{l|l|l|l}
    function & input & output & guarantee \\\hline
    \texttt{barycentric\_subdivision} & $X\subseteq \R^N$, $\delta\in \R_{\geqslant 0}$ & $B(X,\delta) \subseteq \R^N$ & \cref{thm_mainbary} \\
    \texttt{sparsification} & $X\subseteq \R^N$, $\epsilon\in\R_{\geqslant 0}$ & $S(X,\epsilon) \subseteq \R^N$ & \cref{thm_mainsparse} \\
    \texttt{gridification} & $X\subseteq \R^N$, $\mu\in \R_{>0}$, $z\in \R^N$ & $G(X,\mu)\subseteq \mu\Z^N$ & \cref{thm_maingrid} \\
    \texttt{complement} &  $G\subseteq \mu\Z^N$ & $\mu\Z^N \setminus G\ \subseteq \mu\Z^N$ & \cref{thm_complement} \\
    \texttt{thickening} &  $G\subseteq \mu\Z^N$ & $G_t \subseteq \frac\mu2\Z^N$ & \cref{thm_complement}
    \end{tabular}
	\caption{Selected functions provided in the implementations.}
	\label{table_functions}
\end{table}

% \begin{algorithm2e}
%     \caption[TopoAware]{\sc{\nameshort}}
% 	\label{alg_topoaware}
% 	\KwIn{Finite set $X\subseteq\R^N$, barycentric radius $\delta$, sparsification radius $\epsilon$}
% 	\KwOut{Finite set $S\subseteq \R^N$}
% 	$VR_{\delta}(X) \leftarrow $ Vietoris--Rips complex on $X$ with radius $\delta$ \;
% 	$B = \{\}$  \tcp*{initialize set for barycentric subdivision}
% 	\For{$\sigma=\{\sigma_0,\dots,\sigma_n\} \in VR_{\delta}(X)$}{ 
% 		$B \leftarrow B \cup \{\frac{\sigma_0+\cdots+\sigma_n}{n+1}\}$
% 	} 
%     $S = \{\}$ \tcp*{initialize set for sparsification}
% 	\For{$b \in B$}{ 
%         \If{$b$ is not marked for removal}{
%             $S \leftarrow S \cup \{b\}$ \;
%             mark for removal every $b'\in B$ with $d(b,b') \in (0,\epsilon)$ \;
% 		    $B \leftarrow B \cup \{\frac{\sigma_0+\cdots+\sigma_n}{n+1}\}$
%          }
% 	} 
% 	\Return{$S$}
% \end{algorithm2e}

% An immediate result of \cref{thm_mainbary,thm_mainsparse} is that the bottleneck distance between the input and output of \cref{alg_topoaware} is at most $\frac\delta4 + \frac\epsilon2$.

\subsection{Testing on synthetic data}
\label{sec_syntheticdata}

To demonstrate the effect of our methods and stability guarantees, we sample data from a known 2-dimensional shape in $\R^2$ (see \cref{fig_syntheticdata}, left).
Knowing the homology of the underlying shape from which the dataset is sampled allows us to better interpret the computed results.
The sample size is 1489, with sample density varying across the shape, and noise added to each sampled value.

\begin{figure}[htbp]
    %\internallinenumbers
    \centering
	\includegraphics[width=\textwidth]{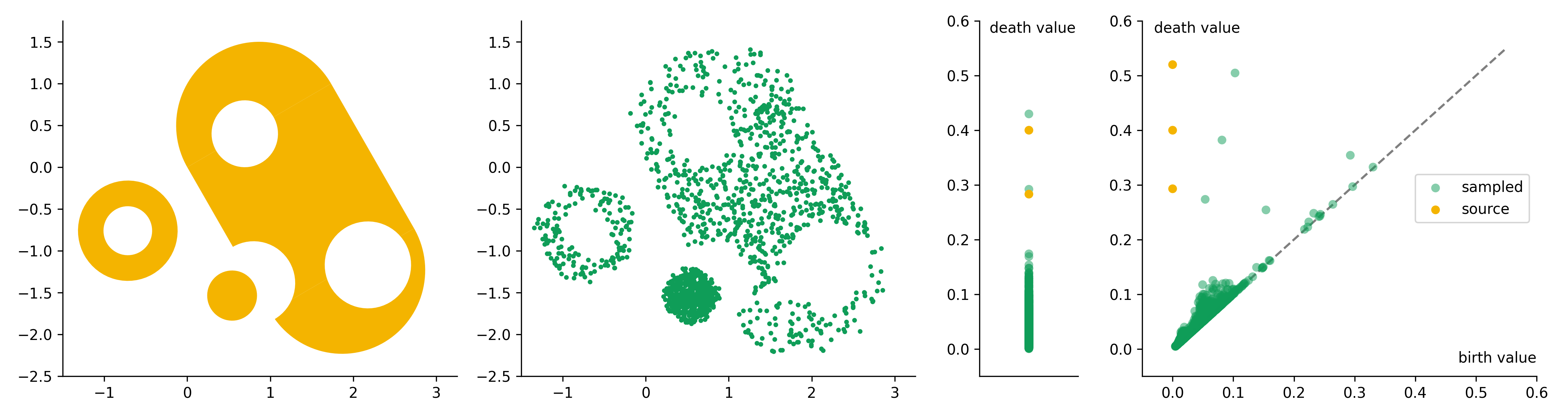}
	\caption{Source shape (left) and a sample of varying density of approximately 1500 points (center left). The associated finite classes in $D_0$ (center right) and in $D_1$ (right).}
	\label{fig_syntheticdata}
\end{figure} 

Given the sampled data, we compute the Vietoris--Rips complex up to dimension 2 and a chosen radius $\delta$, take the vertices of the barycentric subdivision, and sparsify with minimum distance $\epsilon$.
The PH of the alpha complex on the resulting dataset is then computed and compared with the PH of the known source shape.
The results of this experiment are reported in \cref{fig_syntheticcomparison1}.
We then compute the associated grid, thickening, and complement, along with the respective cubical complexes.
One such computation is presented in \cref{fig_syntheticcomparison2}.
Note in \cref{fig_syntheticcomparison2} the visual confirmation that $\mathcal C_\mu(G(X,\mu)$ and $\mathcal C_\mu(\mu\Z^N\setminus G(X,\mu))$ are not complements (as topological spaces) of each other, supporting the choice of spaces in \cref{thm_complement}.

\begin{figure}[htbp]
    %\internallinenumbers
    \centering
	\includegraphics[width=15cm]{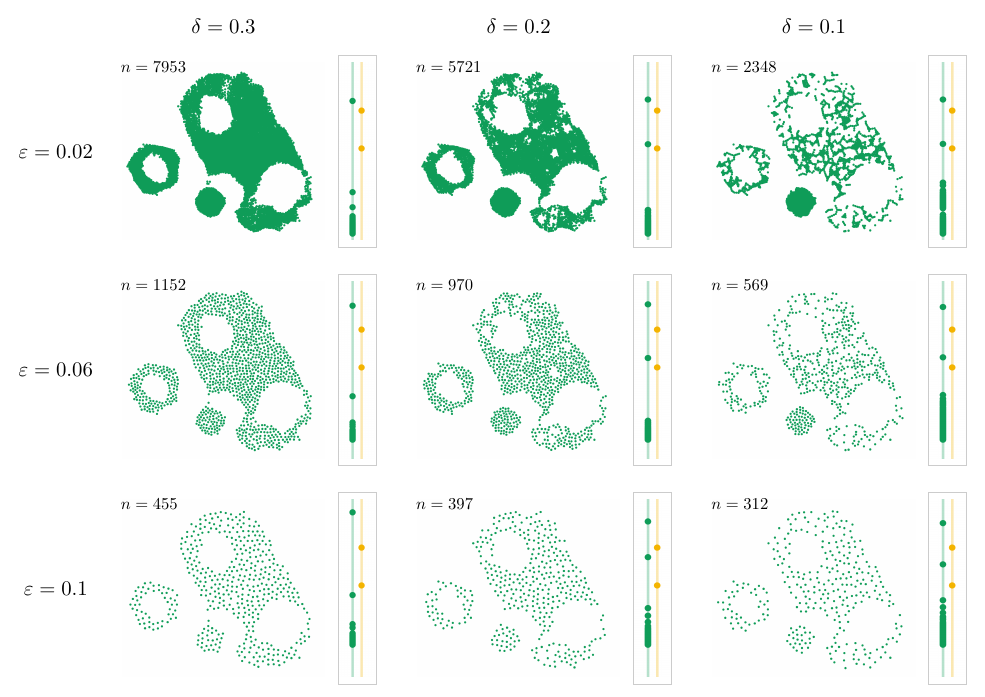}
    \caption{The output after applying \texttt{barycentric\_subdivison}, followed by \texttt{sparsification}, to the input dataset, for $\delta\in\{0.1,0.2,0.3\}$ and $\epsilon\in\{0.02,0.06,0.1\}$. Persistence diagrams in degree 0 and sizes of the outputs are also displayed.}
	\label{fig_syntheticcomparison1}
\end{figure} 

Note that for large values of $\delta$ the persistence pairs which may be considered noise (heuristically, the points that cluster near 0 in the diagrams of \cref{fig_syntheticcomparison1}) have a lower spread than for small values of $\delta$.
This may be considered as a type of \emphasize{noise reduction}, as all the empty regions bounded by data points within a distance $\delta$ of each other now get a new data point in the middle, at most halving the length of a bar in the barcode.

% The data used as input and output, as well the procedure for constructing the visuals, is publicly available along with the main implementation \cite{Lazovskis_TopoAware_2024}.

\begin{figure}[htbp]
    %\internallinenumbers
    \centering
	\includegraphics[width=15cm]{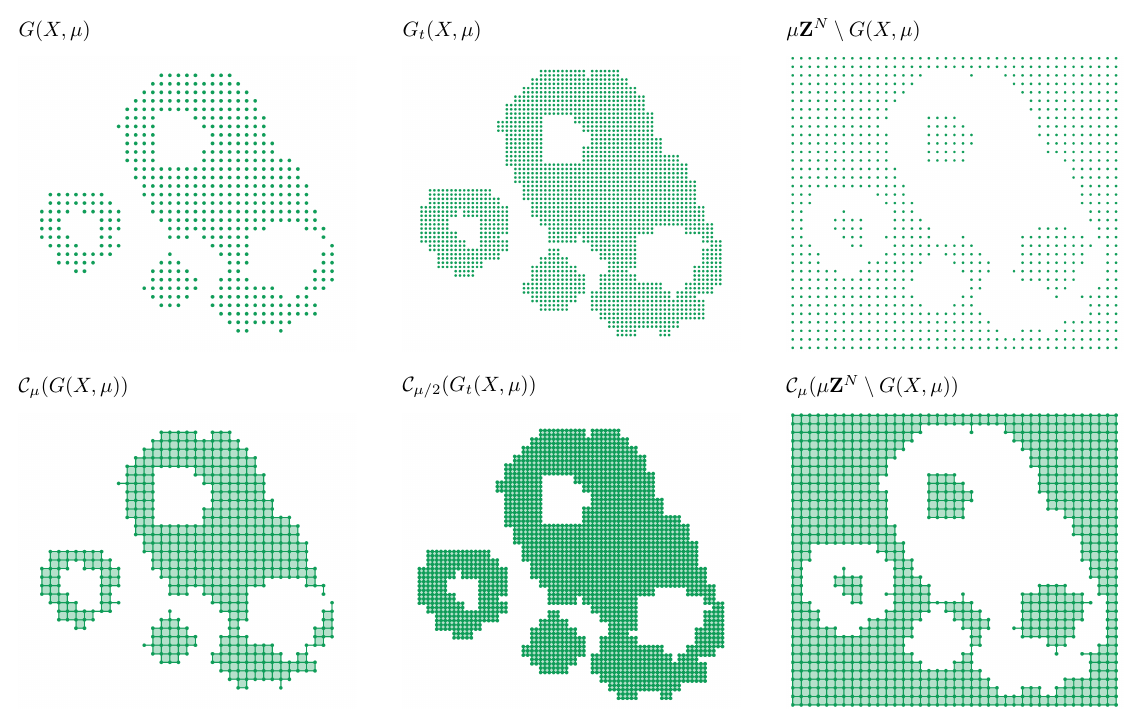}
	\caption{Grids (top row) and cubical complexes (bottom row) computed on the output of one of the point clouds from \cref{fig_syntheticcomparison1}. The point cloud used is $S(B(X,0.3),0.06)$, with $\mu=0.12$.}
	\label{fig_syntheticcomparison2}
\end{figure}

\subsection{Implementation}
The implementation follows a user-motivated approach, allowing for input and output in standard lists or NumPy arrays.
The Vietoris--Rips complex, alpha complex,  complex followed by operations on its simplices, and sparsification, executed as a sequential process through the underlying set, keeping or dropping each element based on distances computed so far (see \cref{sec_structures} for details).
The map $X\to G(X,\mu)$ is also implemented, for $\mu,z$ selected by the user.
The simplicial complex constructions, persistence computations, and bottleneck distance measurements are all done by GUDHI \cite{gudhi:SubSampling}, though other similar PH software may also be used for the same tasks.
\begin{lstlisting}[language=C++]
// C++
./barycentric_subdivision --radius 0.3 --max_dim 2 \
 --data_in  "../examples/generated_data.csv" --data_out "bs.csv" 
./sparsification --min_dist 0.06 \
 --data_in  "bs.csv" --data_out "sp.csv"
./gridification --grid_interval .12 \
 --data_in  "sp.csv" --data_out "gr.csv"
./complement --grid_interval .12 --buffer 2 \
 --data_in  "gr.csv" --data_out "co.csv"
./thickening --grid_interval .12 \
 --data_in  "gr.csv" --data_out "th.csv"

# Python
exec(open("topoaware.py").read())
data = np.load('../examples/generated_data.npy')
bs = barycentric_subdivision(data, radius=0.3, max_dim=2)
sp = sparsification(bs, min_dist=0.06)
gr = gridification(sp, grid_interval=0.12, grid_origin=[0,0])
co = complement(gr, grid_interval=0.12, buffer=2)
th = thickening(gr, grid_interval=0.12)

# R
source("topoaware.r")
data <- read.csv("../examples/generated_data.csv")
bs <- barycentric_subdivision(data, radius=0.3, max_dim=2)
sp <- sparsification(bs, min_dist=0.06)
gr <- gridification(sp, grid_interval=0.12, grid_origin=c(0,0))
co <- complement(gr, grid_interval=0.12, buffer=2)
th <- thickening(gr, grid_interval=0.12)
\end{lstlisting}
GUDHI is written in C++, the functions then being wrapped to work in Python, and further the Python functions wrapped to use in R. 
The R language is most often used in computational biology, and as GUDHI has been ported to R \cite{rgudhi}, we are able to provide self-contained code in R as well.
This particular implementation in R does not contain all the functions that the C++ and Python variants do, but for our purposes there are enough functions.

\subsection{Comparison with a statistical method}
We briefly present a comparison of the same synthetic data as in \cref{sec_syntheticdata}, when used as input for the methods of the R package \texttt{hypervolume} \cite{blonder2018}.
Other methods exist (for example, \cite{elith2006, qiao2016nichea}), but as the goal of this section is more to build a bridge to an applied area, rather than to evaluate the benefits of different approaches, we stick to a closer look at only one approach.
The primary target audience for the \texttt{hypervolume} package is ecologists, and the outputs of the methods of \nameshort~are similar to those of \texttt{hypervolume}, though \texttt{hypervolume} provides a a much wider array of tools with which to infer properties of and perform operations on point clouds (for example volume, union).

\begin{figure}[htbp]
    %\internallinenumbers
    \centering
	\includegraphics[width=\textwidth]{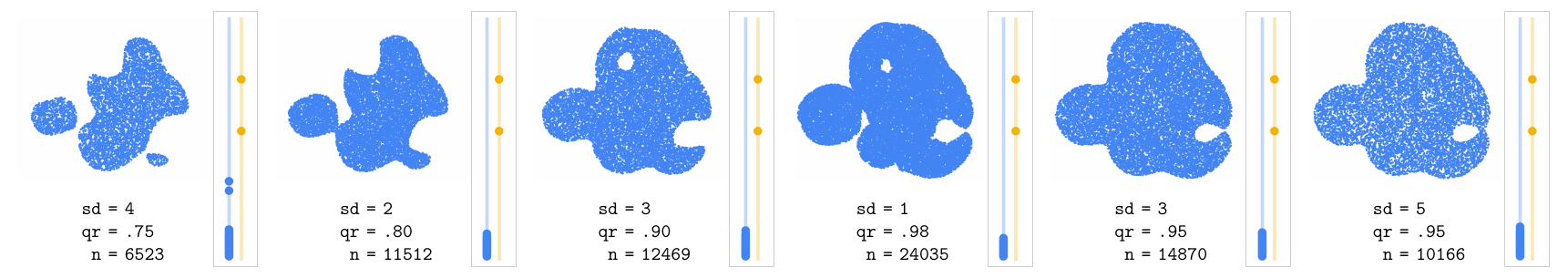}
	\caption{The source sample from \cref{fig_syntheticdata} input into the statistical methods of the R package \texttt{hypervolume} \cite{blonder2018}. The standard deviation of the distribution (\texttt{sd}) and the certainty quantile requested (\texttt{qr}) are indicated, along with the number of points that these parameters yield (\texttt{n}). The default values are \texttt{sd=3} and \texttt{qr=.95} (second right). Alpha complex persistence diagrams $D_0$ are compared with that of the source shape.}
	\label{fig_blonder}
\end{figure} 

A visual overview of outputs for different parameters is presented in \cref{fig_blonder}.
Note that the general shape is somewhat comparable, though the 0-dimensional topological features in most cases have been lost (compare with the $D_0$ diagrams in \cref{fig_syntheticcomparison1}).
This statistical approach may be considered complementary to the barycentric subdivision approach: \texttt{hypervolume} infers new samples \emphasize{around} each existing sample, while \nameshort~infers new sample \emphasize{between} collections of existing samples.

\section{Discussion}
\label{sec_discussion}

In this work we have presented tools with which to modify finite metric spaces, demonstrating that topological information is still preserved, within bounds related to the parameters of modification.
We focus on dimension 0 and codimension 1 homology, for interpretability and ease of computation, in particular for applications from other areas of science, such as computational ecology. 

As the setup only requires an input point cloud with a notion of distance, our methods are well-suited to any dataset, in particular large ones for which computations are resource-intensive.
As is usually the case with new inferences being made from existing observations, we must be careful to consider the new data points as mathematical constructions, not necessarily having precise preimages in the real world setting from which the input data was taken.

Improvements of \nameshort~include adjusting to the interests of the applied computational community, for example computing the $(N-1)$-volume of the boundary and the $N$-volume of the alpha complex, as this is something that ecologists are interested in and that comparable methods provide \cite{elith2006,blonder2018}.
Considering computational improvements, we note that barycentric subdivision and (less so) sparsification lend themselves to parallelization, though the order of elements and the rejoining of outputs after a parallel execution requires considerations not discussed here.
As a next step for developing these ideas, we intend to present these methods to an ecological audience, and provide more examples, comparisons, and interpret the results in the context of that field.

% \juliano{I think the idea of remote sensing data and LiDAR could come in the discussion, probably looking at the future applications. Since we – nor anybody – have applied the method to those specific types of data. We dont know what we dont know yet and seems to me like a "future directions" kind of thing.}

\paragraph{Data availability.} 
The implementation of the methods described here is available on a public GitHub repository \cite{Lazovskis_TopoAware_2024}. 
The synthetic data generated and processed in \cref{sec_syntheticdata}, as well as the particular functions applied to generate data for visualizations, are also available at the same location.

\paragraph{Contributions.} 
All authors contributed equally to the initial vision of this paper. JL wrote the first draft and created the visualizations. All authors contributed equally to editing and preparing the final draft.

\paragraph{Acknowledgements.} 
JL, RL, JM were supported by the University of Cambridge and the Issac Newton Institute's Retreat Programme in October 2024, during which the initial vision of this paper was developed.
JL was partially supported by the Latvian Council of Science (LZP) 1.1.1.9 Research application No 1.1.1.9/LZP/1/24/125 of the Activity ``Post-doctoral Research" ``Efficient topological signatures for representation learning in medical imaging".

\printbibliography

\end{document}